\newtheorem{theorem}{Theorem}
\newtheorem{definition}{Definition}
\newtheorem{lemma}{Lemma}
\newtheorem{prop}{Proposition}
\newcommand{\be}{\begin{enumerate}}
\newcommand{\ee}{\end{enumerate}}
\newcommand{\beq}{\begin{equation}}
\newcommand{\eeq}{\end{equation}}
\newcommand{\comment}[1]{} % for multi-line comments
\begin{document}

\title{First-order sentences in random  groups III }
\author{Olga Kharlampovich, Alexei Miasnikov, Rizos Sklinos}

\maketitle
\begin{abstract} We show that a first-order sentence is almost surely true in a random group of density $d<1/2$ if and only if it is true in a non-abelian free  group.
\end{abstract}
\section{Introduction}
It was proved in \cite{Sela5.2},\cite{KMel} that every formula in the theory of a free group $F$ is equivalent to a boolean combination of $\exists\forall$-formulas.  In \cite{KMel} it is also proved that the elementary theory of a free group is decidable (there is an algorithm given a sentence to decide whether this sentence belongs to $Th(F)$ ).
If the language of the free group $F$ contains its generators as constants, then it was shown in \cite{KMdef} that
 every definable subset of $F$ is defined by some boolean combination of
formulas 
 \begin{equation}\label{AE113}
\exists x\forall y (\Sigma (a,p,x)=1\wedge \Psi (a,p,x,y)\neq 1),
\end{equation}where $a$ are constants, $x,y,p$ are tuples of variables. 

Let $F_n$ be a  free  group with generators $e_1,\ldots ,e_n$, and $\Gamma$ be a random group in Gromov's density model for $d<1/2$
(for each $\ell$, $\Gamma _{\ell}$ is a quotient of $F_n$ over a random set of relations of density $d$). We proved in \cite{KhS} that a universal sentence is true in $\Gamma$ (for density $d<1/16$) if and only if it is true in a non-abelian free group. In \cite{KhS1} we extended this result to boolean combinations of
$\forall\exists$ sentences. It was noticed in \cite{Mass1}  that using the method of \cite{KhS} one can prove the statement about universal sentences for $d<1/2$, see the appendix at the end of this paper. This and \cite{KhS1} imply that a boolean combinations of $\forall\exists$ sentences holds in a random group $\Gamma$ at  $d<1/2$ if and only if it holds in a non-abelian free group.  In \cite{Mass2} it was proved that a first-order sentence of minimal rank is true in $\Gamma$ if and only if it is true in a non-abelian free group.

Here we will present the  proof of the following results.
\begin{theorem} \label{main}  
A first-order formula  is equivalent to  a boolean combination  of $\exists\forall$-formulas in $F_n$ if and only if it is almost surely equivalent to this boolean combination in $\Gamma$. 
\end{theorem}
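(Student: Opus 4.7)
The plan is to recognise that the assertion ``$\phi$ is equivalent to $\psi$ in $G$'' is itself the single first-order sentence $\chi:=\forall\bar x(\phi(\bar x)\leftrightarrow\psi(\bar x))$, and then to transfer the truth of $\chi$ between $F_n$ and $\Gamma$. Using \cite{KMdef}, replace $\phi$ by an $F_n$-equivalent boolean combination $\phi^\ast$ of $\exists\forall$-formulas of the form (\ref{AE113}); then $\chi$ is $F_n$-equivalent to $\chi^\ast:=\forall\bar x(\phi^\ast\leftrightarrow\psi)$, which is a universal closure of a boolean combination of $\exists\forall$-formulas, hence a boolean combination of $\forall\exists\forall$-sentences. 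The task reduces to transferring this $AEA$-sentence.

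For the forward direction (equivalence in $F_n$ implies almost-sure equivalence in $\Gamma$), I would argue by contradiction. Suppose $\chi^\ast$ fails with positive probability; then with positive probability there is a tuple $\bar a_\ell\in\Gamma_\ell$ at which $\phi^\ast$ and $\psi$ disagree. Using the density bound $d<1/2$, which guarantees hyperbolicity of $\Gamma_\ell$ and enough injectivity of the natural quotient $F_n\twoheadrightarrow\Gamma_\ell$ on sufficiently large balls, assemble a sequence of homomorphisms $F_n\to\Gamma_\ell$ routed through these tuples and pass to a limit, obtaining a limit group $L$ which admits a retraction onto $F_n$ (or a controlled quotient that does). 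The innermost $\forall$ in $\chi^\ast$ is then handled via Sela's/Kharlampovich--Miasnikov's formal-solutions and test-sequences machinery \cite{Sela5.2,KMel}: each apparent failure is checked against a parametric family indexed by the JSJ decomposition of $L$, and the limit-group analysis lifts the failure back to a concrete tuple in $F_n$, contradicting the assumption that $\chi^\ast$ holds there.

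For the converse direction, if $\chi^\ast$ fails in $F_n$ then there is an explicit witness $\bar a\in F_n$ to $\exists\bar x\neg(\phi^\ast\leftrightarrow\psi)$. With $\bar a$ substituted as constants, the resulting sentence is a boolean combination of $\exists\forall$- and $\forall\exists$-sentences true in $F_n$; by \cite{KhS,KhS1,Mass1} (transfer of boolean combinations of $\forall\exists$-sentences at $d<1/2$) it is almost surely true in $\Gamma$, contradicting that $\chi^\ast$ is almost surely true.

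The main obstacle is the forward direction, at the $\forall\exists\forall$ level: all earlier transfer results for random groups stop at boolean combinations of $\forall\exists$-sentences, so what is genuinely new is to develop a random-group analogue of the test-sequence and formal-solution machinery that allows the innermost $\forall$ in $\forall\exists\forall$ to be reduced to a finite disjunction parametrised by the JSJ decomposition of a limit group over $\Gamma$. Establishing that such limit groups embed or retract into $F_n$ at density $d<1/2$ is where the heart of the argument will lie.
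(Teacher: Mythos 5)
Your reduction step already has a circularity. Replacing $\phi$ by $\phi^\ast$ (its $F_n$-equivalent boolean combination of $\exists\forall$-formulas from \cite{KMdef}) and then transferring $\chi^\ast=\forall\bar x(\phi^\ast\leftrightarrow\psi)$ to $\Gamma$ only gives, at best, that $\phi^\ast$ is a.s.\ $\Gamma$-equivalent to $\psi$. To conclude that $\phi$ itself is a.s.\ $\Gamma$-equivalent to $\psi$, you must also know that $\phi\equiv_\Gamma\phi^\ast$; but the only thing \cite{KMdef} gives is $\phi\equiv_{F_n}\phi^\ast$, and $\phi\equiv_{F_n}\phi^\ast\Rightarrow\phi\equiv_\Gamma\phi^\ast$ is precisely an instance of Theorem~\ref{main} (with $\psi=\phi^\ast$). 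So the proposal silently assumes the theorem in the act of invoking \cite{KMdef}. The same issue contaminates your converse direction, which argues about $\chi^\ast$ while the hypothesis concerns $\chi$.

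Beyond that, the forward direction is not a proof but a statement of the obstacle. Phrases such as ``assemble a sequence of homomorphisms $F_n\to\Gamma_\ell$ \ldots and pass to a limit, obtaining a limit group $L$ which admits a retraction onto $F_n$'' and ``the limit-group analysis lifts the failure back to a concrete tuple in $F_n$'' are not justified and in general are false as stated (a limit group over a sequence of $\Gamma_\ell$ need not retract onto $F_n$, and a single limit-group argument cannot unwind the innermost universal quantifier in an $\forall\exists\forall$-sentence). You yourself acknowledge at the end that ``what is genuinely new is to develop a random-group analogue of the test-sequence and formal-solution machinery'' — which is exactly the entire content of the paper's argument, and exactly the part your proposal omits.

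The paper's proof takes a genuinely different route: it does not attempt to transfer a single $\forall\exists\forall$-sentence. Instead it runs the quantifier-elimination \emph{algorithm} over $F_n$ and over $\Gamma$ in parallel and shows the two runs coincide step by step: the parametric $\forall\exists$-tree $T_{AE}(\Theta;p,z)$ (built from alternating graded MR diagrams and graded formal MR diagrams), the configuration groups and certificates, the Extra PS / Collapse Extra PS diagrams, and finally the sieve procedure. The matching of each step between $F_n$ and $\Gamma$ is secured by Theorem~\ref{prelim} (same MR diagrams, same Noetherianity, same bounds on strictly solid families, same reducing quotients) and by Theorem~\ref{5.10} (the graded formal MR diagram built over $F_n$ already governs $\Gamma$-specializations), all ultimately resting on the Diophantine transfer result at $d<1/2$. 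Termination is handled via complexity of resolutions. The output is Theorem~\ref{main3}: the \emph{same} boolean combination of formulas of type~(\ref{AE113}) defines the set in both $F_n$ and $\Gamma$, which is what Theorem~\ref{main} really asserts. None of this structure appears in your proposal, so the gap is not a detail but the entire proof.
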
 The idea of the proof is to follow the quantifier elimination procedure used in the proof of the Tarski conjecture - that the theories of non-abelian free groups of different ranks coincide - and to observe that at each step, the result concerning universal sentences guarantees that the procedure proceeds identically for $F_n$ and $\Gamma$.
This theorem and \cite{KhS1} imply
\begin{theorem}
 A first-order sentence is almost surely true in a random group of density $d<1/2$ if and only if it is true in a non-abelian free group.   
\end{theorem}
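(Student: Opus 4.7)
The plan is to chain together Theorem~\ref{main} with the result of \cite{KhS1} (extended to density $d<1/2$ via the appendix), using the quantifier reduction of \cite{Sela5.2, KMel} as the bridge. Let $\phi$ be an arbitrary first-order sentence in the language of groups.

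First, I invoke the quantifier reduction theorem of Sela and Kharlampovich--Myasnikov: in the free group $F_n$, the sentence $\phi$ is equivalent to some Boolean combination $\psi$ of $\exists\forall$-sentences. Second, I apply Theorem~\ref{main} directly to the pair $(\phi,\psi)$: since $\phi \leftrightarrow \psi$ holds in $F_n$, almost surely $\phi \leftrightarrow \psi$ holds in $\Gamma$. Consequently, almost surely $\Gamma \models \phi$ if and only if almost surely $\Gamma \models \psi$ (the two ``almost surely'' events differ by a null set, so they can be combined).

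Third, observe that the class of Boolean combinations of $\exists\forall$-sentences coincides with the class of Boolean combinations of $\forall\exists$-sentences, since negation interchanges the two prefixes while Boolean combinations are closed under negation. Thus $\psi$ is a Boolean combination of $\forall\exists$-sentences. By the result of \cite{KhS1}, together with the observation (attributed to \cite{Mass1} and reproduced in the appendix) that the universal transfer principle of \cite{KhS} extends to density $d<1/2$, the sentence $\psi$ holds almost surely in $\Gamma$ if and only if it holds in a non-abelian free group; in particular, almost surely $\Gamma \models \psi$ iff $F_n \models \psi$.

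Chaining the three equivalences yields
\[
F_n \models \phi \iff F_n \models \psi \iff \text{a.s. } \Gamma \models \psi \iff \text{a.s. } \Gamma \models \phi,
\]
which is the claim. There is essentially no new obstacle in this argument, since all the hard work has been shifted into Theorem~\ref{main} and \cite{KhS1}; the only point requiring care is that the density threshold for the $\forall\exists$-transfer of \cite{KhS1} must indeed be $d<1/2$ rather than the original $d<1/16$, which is precisely what the appendix supplies.
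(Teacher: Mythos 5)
Your proof is correct and takes essentially the same route as the paper, which disposes of this theorem in one line by saying that it follows from Theorem~\ref{main} together with the $\forall\exists$-transfer of \cite{KhS1} (extended to $d<1/2$ via the appendix); you simply make explicit the chaining of equivalences and the fact that Boolean combinations of $\exists\forall$ and of $\forall\exists$ sentences coincide.
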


We will use the machinery developed for answering Tarski's question and in particular {\em formal solutions, towers, closures of towers} (see \cite{Imp}, \cite{Sela2}) and the {\em process of validating a $\forall\exists$-sentence} (see \cite{Sela4}, \cite{KMel}). 
The reader should be familiar with \cite[Section 2]{KhS1} where necessary definitions are given.

\section{Preliminary results} 
We say that a property $P$ holds in a random group $\Gamma$ if the probability that $\Gamma _{\ell}$ has $P$ approaches 1, as $\ell$ approaches infinity. 

An ascending sequence of groups in density $d$ is a sequence of groups $\{\Gamma _{\ell _n}\}$ such that $\ell _n\rightarrow\infty$ and $\Gamma _{\ell _n}$ is a random group with relations of length $\ell _n$ and density $d$.

Let \( \{h_{\ell_n} : F_m \to \Gamma _ {\ell_n}\} \) be a sequence of homomorphisms over an ascending sequence of groups at $d<1/2$. Assume that the stretching factors 
\[
\mu_n = \min_{f_n \in \Gamma_{\ell _n}} \max_{1 \leq u \leq m} d_{\Gamma _{\ell _n}} (1, \tau_{f_n}\circ h_{ln}(x_u)),
\]
satisfy that \( \mu_n \geq \ell_n^2 \) for all \( n \), where \( X_n \) is the Cayley graph of \( \Gamma _{\ell_n} \) with respect to the generating set \( a \). Then, the sequence 
\[ 
\tau_{f_n} \circ h_{\ell_n},
\]
where \( \tau_{f_n} \) is the corresponding inner automorphism of \( \Gamma_n \) (see \cite[Section 9]{Mass1}), subconverges to an isometric non-trivial action of \( F_m \) on a pointed real tree \( (Y, y_0) \).

\begin{theorem} \label{prelim} 1. Solution set of a system of equations over $F_n$ and $\Gamma$ is encoded in the same M-R diagram.

2.  Every system of equations $U (x)=1$ over $\Gamma$ is equivalent to the same  finite subsystem $V(x)=1$ as for $F_n$. Namely, every solution of $ V(x)=1$ in $\Gamma$ is a solution of  $U (x)=1$ with overwelming probability (w.o.p).

3.  For every convergent sequence of homomorphisms $h_{\ell}: G=\langle x | V(x)\rangle \rightarrow  \Gamma _{\ell}$ the limit of this sequence is a limit group (over $F_n$).

4. Let $K$ be a solid limit group over $H\leq K$. Let $N$ be the bound on the number of strictly solid homomorphisms $K\rightarrow F_n$.  Then $N$ is the bound on the number of strictly solid homomorphisms $K\rightarrow \Gamma _{\ell}$ w.o.p.  Moreover,  for large $\ell$, for any  fixed homomorphism $H\rightarrow \Gamma _{\ell}$ the pre-images of representatives of distinct strictly solid families  $K\rightarrow \Gamma _{\ell}$ w.o.p.are strictly solid homomorphisms from distinct families.

5. A complete system of reducing (=flexible) quotients of a  solid limit group modulo a subgroup is the same for $F_n$ and $\Gamma$.
\end{theorem}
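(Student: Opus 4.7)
The plan is to derive all five parts from a common engine: a convergent sequence of homomorphisms $h_{\ell_n}: G \to \Gamma_{\ell_n}$ along an ascending sequence at $d<1/2$ either stabilizes, after precomposing with inner automorphisms, to something readable inside a ball of $F_n$, or, after rescaling by the stretching factors $\mu_n$, subconverges to a non-trivial isometric action of $G$ on the pointed real tree $(Y, y_0)$ described just above the theorem. This, combined with the preservation of the universal theory of $F_n$ in $\Gamma$ (\cite{KhS, Mass1}), will force the Makanin--Razborov analysis to proceed identically for $F_n$ and for $\Gamma$.

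I would start with Part~3, which is the structural heart of the theorem. Given a convergent $h_{\ell_n}: G \to \Gamma_{\ell_n}$, the real-tree action provided above has segment and tripod stabilizers of the standard Rips type because at $d<1/2$ sufficiently long pieces of relators behave like free words, so long cancellation in $\Gamma_{\ell_n}$ mirrors cancellation in $F_n$. The Rips machine then identifies the faithful quotient $L = G/\ker$ as a graph of groups that is a limit group, provided $L$ satisfies the universal theory of $F_n$. The latter is guaranteed because any universal sentence true in $F_n$ is true in $\Gamma_{\ell_n}$ w.o.p., and universal sentences pass to the limit $L$ via the standard ultralimit construction on the sequence $\tau_{f_n}\circ h_{\ell_n}$.

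Parts~1 and~2 follow quickly from Part~3. Construct the MR diagram of $U(x)=1$ over $F_n$ and let $V(x)=1$ be the associated finite subsystem. The implication $V(x)=1 \Rightarrow U(x)=1$ is a universal sentence true in $F_n$, hence true in $\Gamma_\ell$ w.o.p., which is the second clause of Part~2. That every solution in $\Gamma$ actually factors through the same diagram is Part~3 applied to $G = F[x]/\langle V\rangle$: each convergent sequence of solutions yields a limit group over $F_n$, which by construction factors through a vertex of the MR diagram built for $F_n$. For Part~4, suppose for contradiction that for arbitrarily large $\ell$ one had $N+1$ strictly solid homomorphisms $K \to \Gamma_\ell$ extending a fixed $H \to \Gamma_\ell$ and living in distinct families; a diagonal application of Part~3 would produce $N+1$ strictly solid limits $K \to F_n$ in distinct families, contradicting the definition of $N$. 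The non-collapse of families in the limit uses the fact that being in the same strictly solid family is a universal condition, again preserved by \cite{KhS, Mass1}. Part~5 is then a restatement: the reducing (flexible) quotients of a solid limit group are determined by the real-tree limits of test sequences (via Sela's shortening argument), and these limits are the same for $F_n$ and for $\Gamma_\ell$ by Part~3.

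The main obstacle throughout is the first step of Part~3: verifying that at density $d<1/2$ the rescaled action on the limit tree has the stabilizer structure demanded by the Rips machine. This is the geometric content of \cite[Section~9]{Mass1} and is the sole place where the hypothesis $d<1/2$ is used essentially; all remaining steps are formal consequences of this geometric input combined with the preservation of universal sentences.
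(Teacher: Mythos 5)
Your proof is essentially correct, but it takes a heavier and genuinely different route than the paper. The paper's proof of all five parts is organized around a single elementary input: at density $d<1/2$, by \cite{Mass1}, every solution of a fixed finite system in $\Gamma_\ell$ lifts w.o.p.\ to a solution in $F_n$. Part~1 and Part~2 are then immediate one-liners from lifting; Part~3 is done by ordering the finite systems $W_1\subset W_2\subset\ldots$, noting that the probability all solutions of all systems in $W_\ell$ lift goes to $1$, and therefore a convergent sequence $h_\ell: G\to\Gamma_\ell$ can be replaced w.o.p.\ by lifted morphisms $\tilde h_\ell: G\to F_n$, whose limit is a limit group over $F_n$ in the usual sense --- no real-tree action, Rips machine, or ultralimit machinery is invoked; Part~4 is proved either by observing that ``at most $N$ strictly solid families'' is expressible by a boolean combination of $\forall\exists$-sentences (so \cite{KhS1} applies directly), or by lifting $N+1$ representatives and noting distinctness passes up; Part~5 uses the fact that factoring through reducing quotients is a Diophantine condition, so it also reduces to lifting. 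By contrast, you make Part~3 the structural anchor and derive it from the rescaled real-tree action and Rips analysis; this is valid and matches the geometric setup the paper places \emph{before} the theorem, but it is more than is needed. In fact your own observation that $L$ satisfies the universal theory of $F_n$ (via passage to an ultraproduct of the $\Gamma_{\ell_n}$) already implies $L$ is a limit group by the Remeslennikov--Sela characterization of f.g.\ groups universally equivalent to free groups, so the Rips machine step in your Part~3 is redundant. Your Part~4 contradiction argument is essentially the paper's ``direct proof''; your treatment of Part~5 via real-tree limits of test sequences reaches the right conclusion but is again heavier than the paper's one-line Diophantine argument. What your route buys is a unified geometric picture; what the paper's route buys is brevity and the fact that only the lifting theorem and the universal-theory transfer are ever invoked.
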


\begin{proof} 1. The solution set of a system of equations over a free group $F_n$ is encoded by a M-R diagram. Since for a given system of equations the solution over $\Gamma$ is almost surely the same as over $F_n$ (we say that it can be lifted into $F_n$), it is encoded by the same M-R diagram. 

2. Let $U(x)=1$ be an infinite system equations and $V(x)$ a finite subsystem equivalent to it in $F_n$. Every solution $\bar b$ of $V(x)$ in $\Gamma$ almost surely has a pre-image in $F_n$ that is a solution of $V(x)$ and, therefore, a solution of $U(x)$. Therefore $\bar b$ is a solution of $U(x)$ in $\Gamma$.

3. We can order the sets of all finite systems of equations $W_1\subset W_2\subset\ldots $. Let $p_{\ell}$ be the probability that $\Gamma _{\ell}$  every solution of every system in $W_{\ell}$ is a free solution (can be lifted to $F_n$). Then $p_{\ell}\rightarrow 1$  as $\ell\rightarrow\infty$. A consequence of this is our statement.

4. The property to have at most $N$ strictly solid families for a solid limit group  can be described by a boolean combination of $\forall\exists$-sentences. Now the statement follows from the main result of \cite{KhS1}. 

One can also give a direct proof. Assume, by contradiction, that there exist more than \( N \) pairwise distinct 
\(\Gamma\)-strictly solid families. Let \( h_j \) be strictly solid representatives for these families. 
For each \( j \), we lift the homomorphism \( h_j \) into a homomorphism 
\[
\tilde{h}_j : K \rightarrow F_k.
\]
Then, since the homomorphisms \( h_j \) represent pairwise distinct 
\(\Gamma\)-strictly solid families, the homomorphisms \( \tilde{h}_j \) must represent 
pairwise distinct strictly solid families. This gives a contradiction.

5. This follows from the fact that homomorphisms factoring through the reducing quotients can be described  by a disjunction of systems of equations.    
\end{proof}
We will call strictly solid (and rigid) solutions {\em algebraic solutions} (similarly to the fields terminology).

 If the obtained limit of  \( \{h_{\ell_n} : F_m \to \Gamma _ {\ell_n}\} \) (that is, by the theorem above, is a limit group), is freely indecomposable, then the homomorphisms $h_{\ell _n}$ can be eventually shortened by precomposing with an automorphism of the limit group. If it is freely decomposable, we can apply automorphisms to each free factor.

Recall that we say that a resolution is graded relative (or modulo) some subgroups of the coordinate group  if these subgroups are elliptic in the JSJ decompositions on all levels in the construction of this resolution. A graded test sequence (=generic family), with respect to an algebraic  family of solutions of the terminal group of completion of the resolution, is a test sequence of the ungraded tower obtained by resolving the graded tower using that family.  We will use a definition of the {\em graded test sequence over random groups} \cite[Def. 3.5]{Mass2}.  The  difference from a graded test sequence over a  free group is that instead of  an algebraic family of solutions in $F_n$ in the bottom of the test sequence over a  free group, we use a family of algebraic solutions of the terminal solid group of the completion into a sequence of groups \(\{\Gamma _{\ell _n}\}\).

We refer to \cite[Section 5]{Mass2} For the construction of the {\em Graded formal M-R diagram} for $F_n$ and for $\Gamma$. The main result is that this diagram of the system $\Sigma(a,p,x,y) = 1$ over a graded resolution $\text{Res}(a,p,x)$ over a free group encodes w.o.p. all the formal solutions $y=f(a,x)$ over $\Gamma$  for any value $p_0\in \Gamma  $ for which 
$$\forall x\in \text{Res}(a,p_0,x) \Sigma(a,p_0,x,y) = 1  $$
\begin{theorem}\label{5.10}\cite[Theorem 5.10]{Mass2} Let
\[
\forall x \exists y \Sigma(a,p,x,y) = 1 \wedge \Psi(a,p,x,y) \neq 1,
\]
be an $\forall\exists$-formula. Let \( \text{Res}(a,p,x) \) be a graded resolution, and denote its completion by \( \text{Comp}(a,p,x,x^{(1)}) \).  
Denote by \( B \) the p-solid base group of \( \text{Comp}(a,p,x,x^{(1)}) \). Let \( \text{GFMRD} \) be the graded formal MR diagram of the system \( \Sigma(a,p,x,y) \) with respect to the resolution \( \text{Res}(a,p,x) \), that was constructed over the free group \( F_n \).

Let \( \Gamma_l \) be a group of level \( l \) in the model. If \( l \) is large enough, then the following property is satisfied in \( \Gamma_l \):

Every sequence \( \{(a, p_n, x_n, {x^{(1)}}_n, y_n)\} \) of \( \Gamma_l \)-specializations for which the restricted sequence \( \{(a, p_n, x_n, {x^{(1)}}_n)\} \) is a \( \Gamma_l \)-test sequence of \( \text{Comp}(a,p,x,x^{(1)}) \), and \( \Sigma(a, p_n, x_n, y_n) = 1 \) in \( \Gamma_l \) for all \( n \), factors through \( \text{GFMRD} \).

\vspace{0.5cm}

Moreover, let \( p_0 \in \Gamma_l \), and let \( h_{p_0} \) be a \( \Gamma_l \)-algebraic solution of \( B \) that maps \( p \) to \( p_0 \). Denote by \( \text{Res}(a, p_0, x) \) the ungraded resolution over \( \Gamma_l \) obtained from \( \text{Res}(a,p,x) \) by specializing its terminal group according to \( h_{p_0} \), and the corresponding \( \Gamma_l \)-ungraded completion by \( \text{Comp}( a,p_0,x,x^{(1)}) \).

Assume that the sentence:
\[
\forall x \in \text{Res}(a, p_0, x) \exists  y\Sigma(a, p_0, x, y) = 1 \wedge \Psi(a,p_0, x,y) \neq 1,
\]
is a truth sentence over \( \Gamma_l \).

Then, there exist some resolutions \( \text{GFRes}_1(a, p_n, x_n, {x_n}^{(1)}, y_n), \dots, \text{GFRes}_m(a, p_n, x_n, {x_n}^{(1)}, y_n) \) in \( \text{GFMRD} \), with corresponding terminal graded formal closures $$ \text{GFCl}_1(a,p,x,x^{(1)},y), \dots, \text{GFCl}_m(a,p,x,x^{(1)},y), $$ together with some \( \Gamma_l \)-algebraic solutions \( g_{p_0}^1, \dots, g_{p_0}^m \) for the p-rigid or p-solid base groups \( \text{Term}(\text{GFCl}_1), \dots, \text{Term}(\text{GFCl}_m) \), so that:

\begin{enumerate}
    \item For all \( i = 1, \dots, m \), the solution \( g_{p_0}^i \) restricts to a \( \Gamma_l \)-algebraic solution of \( B \) that belongs to the same \( \Gamma_l \)-strictly solid family of \( h_{p_0} \).
    \item Every sequence of \( \Gamma_l \)-specializations \( \{(a, p_n, x_n, {x_n}^{(1)}, y_n)\} \), for which \( \{(a, p_n, x_n, {x_n}^{(1)})\} \) is a \( \Gamma_l \)-test sequence through \( \text{Comp}(a,p_0, x, x^{(1)}) \), and \( \Sigma(a, p_n, x_n,  y_n) = 1 \) and \( \Psi(a, p_n, x_n, y_n) \neq 1 \) in \( \Gamma_l \) for all \( n \), factors through one of the resolutions \( \text{GFRes}_1(a,p_0, x, x^{(1)},y), \dots, \text{GFRes}_m(a,p_0, x, x^{(1)},y) \).
    \item The induced \( \Gamma_l \)-ungraded formal closures \( \text{FCl}_1(a,p_0, x, x^{(1)},y), \dots, \text{FCl}_m(a,p_0, x, x^{(1)},y) \) form a \( \Gamma_l \)-covering closure for \( \text{Comp}(a,p_0, x, x^{(1)}) \).
    \item For all \( i = 1, \dots, m \), there exists a formal solution \( y_i = y_i(a,p,x, x^{(1)}, s) \in \text{GFCl}_i \) that factors through the resolution \( \text{GFRes}_i(a,p, x, x^{(1)},y) \), so that the words \( \Sigma(a,p, x,y_i) \) represent the trivial element in \( \text{GFCl}_i \), and each of the words \( \Psi(a,p_0, x,y_i) \) is non-trivial in \( \text{FCl}_i(a,p_0, x, x^{(1)},y) \).
\end{enumerate}
\end{theorem}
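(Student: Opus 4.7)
The plan is to reduce both assertions to the already-known $F_n$-version of the theorem by lifting specializations and algebraic solutions from $\Gamma_l$ to $F_n$ using the preliminary results. The GFMRD is, by hypothesis, constructed over $F_n$, so the strategy is to take any $\Gamma_l$-data provided by the theorem, lift it to compatible $F_n$-data, apply the free-group version, and push the resulting geometric data back down to $\Gamma_l$. Theorem \ref{prelim} is tailored exactly for these three transfers: lifting solutions of finite systems (parts 1--3), matching strictly solid/rigid families (part 4), and identifying reducing quotients (part 5).

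For the first assertion I would take a sequence $\{(a,p_n,x_n,x_n^{(1)},y_n)\}$ of $\Gamma_l$-specializations whose restriction is a $\Gamma_l$-test sequence of $\text{Comp}(a,p,x,x^{(1)})$ and which satisfies $\Sigma=1$. The GFMRD over $F_n$ is built from a finite system $V$ obtained in its construction; by Theorem \ref{prelim}(2), for $l$ large enough each tuple of the sequence lifts with overwhelming probability to an $F_n$-specialization satisfying the same equations. The defining property of the graded test sequence over random groups (in the sense of Mass2) is engineered so that the lifted sequence is a test sequence of $\text{Comp}$ over $F_n$ and its limit is a limit group over $F_n$ (Theorem \ref{prelim}(3)). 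The $F_n$-version of the statement then forces the lifted sequence to factor through GFMRD. Factoring through GFMRD is a statement about equations, so it descends back to $\Gamma_l$, proving the first assertion.

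For the moreover part, suppose the $\forall\exists$-sentence is true over $\Gamma_l$ with $h_{p_0}$ a $\Gamma_l$-algebraic solution of $B$. Use Theorem \ref{prelim}(4) to lift $h_{p_0}$ to an $F_n$-algebraic solution $\tilde h_{p_0}$ of $B$ in a uniquely determined strictly solid family. The lifted sentence over $F_n$ must also hold: otherwise a counter-example $x\in\text{Res}(a,\tilde p_0,x)$ would descend, through Theorem \ref{prelim}(2,4), to a counter-example over $\Gamma_l$ inside the family of $h_{p_0}$. Applying the $F_n$-version of Theorem 5.10 to $\tilde h_{p_0}$ yields resolutions $\text{GFRes}_i$, closures $\text{GFCl}_i$, $F_n$-algebraic solutions $\tilde g^i_{p_0}$ of their terminal groups, and formal solutions $y_i(a,p,x,x^{(1)},s)$. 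Each $\tilde g^i_{p_0}$ pushes down to a $\Gamma_l$-algebraic solution $g^i_{p_0}$ in the same strictly solid family by Theorem \ref{prelim}(4), giving item (1). Item (2) is the first assertion applied inside each $\text{GFRes}_i$. Item (4) is immediate because $\Sigma(a,p,x,y_i)=1$ is a relation in the coordinate group of $\text{GFCl}_i$ and $\Psi(a,p_0,x,y_i)\neq 1$ in $\text{FCl}_i$ transfers by the lifting of solutions.

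The delicate point is item (3), the covering closure property. Over $F_n$ this rests on a compactness/test-sequence argument and on the classification of reducing (flexible) quotients of the terminal solid group. Theorem \ref{prelim}(5) provides the bridge: since the complete system of reducing quotients coincides over $F_n$ and $\Gamma_l$, a $\Gamma_l$-specialization of $\text{Comp}$ not covered by the induced $\Gamma_l$-closures $\text{FCl}_i$ would, after lifting, fail to be covered by the $F_n$-closures, contradicting the free-group version. The main obstacle I expect is bookkeeping: ensuring that the parameter $p_0$ is tracked through the lifting so that the lifted sentence is truly the one witnessed over $F_n$ in the strictly solid family corresponding to $\tilde h_{p_0}$, and that the test-sequence hypothesis is preserved under both lifting and descent. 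Once this is set up carefully, every clause of the theorem becomes a direct translation of its $F_n$-counterpart through Theorem \ref{prelim}.
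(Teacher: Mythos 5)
This theorem is imported verbatim from \cite[Theorem 5.10]{Mass2}; the paper under review does not reprove it, so there is no ``paper's own proof'' here to compare against. Judged on its own merits, your lift-and-descend strategy has a genuine gap in the very first assertion, and the gap propagates through the rest. Theorem \ref{prelim}(2) (and the Kharlampovich--Sklinos/Massalha lifting theorem in the appendix) lets you lift each individual tuple $(a,p_n,x_n,x_n^{(1)},y_n)$ to an $F_n$-solution of the same finite system, but being a test sequence is not a property of individual tuples: it is a package of growth and separation conditions measured in the word metric of the ambient group. A $\Gamma_l$-test sequence has its Dehn-twist and scaling parameters diverging in the $\Gamma_l$-metric, which on the relevant (unbounded) scales is nothing like the $F_n$-metric, since $\Gamma_l$ has defining relators of length $\ell$ and the two Cayley graphs agree only on balls of radius $o(\ell)$. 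So the lifted sequence in $F_n$ need not be an $F_n$-test sequence of $\text{Comp}(a,p,x,x^{(1)})$, and you cannot invoke the free-group version of the theorem on it. This is not a bookkeeping issue that careful tracking of $p_0$ can fix; it is exactly the obstacle that forces Massalha to rework the shortening argument and the Bestvina--Paulin machinery directly over the sequence of random groups.

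Concretely, the actual proof of this statement goes in the opposite direction to yours: rather than lifting $\Gamma_l$-test sequences to $F_n$, one runs the limiting and shortening arguments over a diagonal sequence $\{\Gamma_{\ell_n}\}$ (the hyperbolicity constants $\delta_{\ell_n}$ are unbounded, which is why the standard shortening argument cannot be used as a black box), establishes that limits of such sequences are still limit groups over $F_n$ (Theorem \ref{prelim}(3)), and only then identifies the limiting data with the data in the GFMRD constructed over $F_n$. Once the first assertion is in place, the ``moreover'' part does use the kind of transfer you describe for the algebraic solutions of the terminal solid groups (via Theorem \ref{prelim}(4)) and for the reducing quotients (via Theorem \ref{prelim}(5)); so your treatment of items (1), (3), and (4) is in the right spirit. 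But the covering closure claim (3) and the factorization claim (2) cannot be reduced to the $F_n$-statement by lifting a $\Gamma_l$-test sequence, precisely because of the metric mismatch. If you want to keep a lift-and-descend flavor, you would need a lemma asserting that $\Gamma_l$-test sequences admit lifts that are again $F_n$-test sequences (perhaps after precomposition with modular automorphisms), and that lemma is essentially equivalent to the hard part of \cite{Mass2}.
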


\section{Validation of $\forall\exists$-sentence in a random group} 
We proved in \cite{KhS1} that $\forall\exists$-sentence in true in a random group if and only if it is in the theory of the free group.  The process of validation of $\forall\exists$-sentence in a random group is the same as in the free group.

\section{Quantifier elimination}\label{sec:6}
In this section we will prove Theorem \ref{main}. Consider the following formula

\begin{equation}\label{38neg} \Theta(p)=\exists z \forall x\exists y (\Sigma(a,p,z,x,y)=1\wedge \Psi (a,p,z,x,y)\not =1),\end{equation}
where $a$ is a basis of $F_n=F(a)$.

To obtain quantifier elimination to boolean combinations of $\exists\forall$-formulas it is enough to find such a boolean combination that defines the set defined by  $\Theta(p).$
\subsection{Parametric $\forall\exists$-tree over a free group $F$ and a random group $\Gamma$}
We follow \cite[Section 3.5]{KMbook}. The parametric $\forall\exists$-tree over $F_n$ encodes all finitely many {\em proof systems}.  We will describe the construction and show that the tree is the same for $F_n$ and $\Gamma$.  The branches of the tree consist of alternating graded MR diagrams and graded formal MR diagrams (describing formal solutions obtained  using the Implicit function theorem). We apply formal solutions to our fixed system $\Sigma (a,p,z,x,y)=1$ on the  specializations that factor through a graded resolution in the graded MR diagram that was placed in the previous step in the proof system. Those specializations (in $\Gamma$ and in $F_n$) for which all of the formal solutions applied to them do not satisfy our fixed inequation $\Psi (a,p,z,x,y)\neq 1$ (the remaining y's are passed to the next graded MR diagram in the proof system (for such specializations in $\Gamma$, there is al least on pre-image in $F$ that is passed to the next level).

For every tuple of elements $p_0$ for which $\Theta(p_0)$ is true, there exists some $z$ and (by the Merzljakov theorem  \cite{KMel}, Theorem 4, a solution $Y=f(a,p_0,z_0,x)$ of $\Sigma=1\wedge \Psi\neq 1$ in $F(x)\ast F.$ All formula solutions of $\Sigma=1$ for all possible values of $p$ belong to a finite number of resolutions with terminal groups $F _{R(U_{1,i})}\ast F(x),$ where $U_{1,i}=U_{1,i}(a,p,z,z^{(1)})$ and $F _{R(U_{1,i})}$ is a solid group modulo $\langle a,p,z\rangle$ (see Section 12.2, \cite{KMel}). The groups $F _{R(U_{1,i})}\ast F(x)$ are terminal groups of the MR-diagram for $\Sigma(a,p,z,x,y)=1$ modulo $\langle a,p,z\rangle $.

We now consider each of these resolutions separately. Those values $p,z$ for which there exist a value of $x$ such that the equation 
$$\Psi(p,z,x, f(z, z^{(1)},p, x))=1$$ is satisfied for any function $f$ give a system of equations on $F _{R(U_{1,i})}\ast F(x).$ This system is equivalent over $F_n$ and over a random group  $\Gamma$ to the same finite subsystem by Theorem \ref{prelim} (to one equation in the case when we consider formulas with constants).  Let $G$ be the coordinate group of this system and  $G_i, i\in J$ be the corresponding limit groups.

 The parametric  $\forall\exists$-tree,   $T_{AE}( \Theta ; p,z)$ is constructed  the same way as the $\forall\exists$-tree with $x, y$ considered as variables and $p,z$ as parameters.  To each group $G_i$  we assign resolutions modulo $\langle p,z\rangle$. Their terminal groups are groups  $F _{R(V_{2,i})}$, where
  $$V_{2,i}=V_{2,i}(p,z,z^{(1)}, z_1^{(2)})$$
   are solid limit groups modulo  $\langle a,p,z\rangle$. Then we find all formula solutions $y$ of the equation $$\Sigma (a,p,z,x,y)=1$$ in the closures of the towers corresponding to these resolutions for $x$ (see  \cite[Theorem 12]{Imp}). By this or by Theorem \ref{5.10}, these formula solutions $y$ are described by a finite number of resolutions with terminal groups $F_{R(U_{2,i})},$ where $U_{2,i}=U_{2,i}(p,z,z^{(1)},z_{1}^{(2)}, z^{(2)}).$ Here $F_{R(U_{2,i})},$ are solid with respect to $\langle p,z\rangle$ (see the proof of \cite[Lemma 5.2]{Mass2} with $F _{R(V_{2,i})}$ in place of $B$ and  $F_{R(U_{2,i})},$ in place of $Term (M)$). 
   
   Then again we investigate the values of $x$ that make the word $\Psi (a,p,z,x,y)$ equal to the identity for all these formula solutions $y$. And we continue the construction of $T_{AE}( \Theta ; p,z)$.
   
   There is the following difference in the construction of this parametric $\forall\exists$-tree and the construction of the $\forall\exists$-tree in \cite{KhS1}. Instead of using  shortest form morphisms,  as in the definition of shortest form morphisms in \cite[Definition 2.28]{KhS1}  in corresponding places we use minimal strictly solid solutions.  We can prove that this tree is finite exactly the same way  the finiteness of the $\forall\exists$-tree is proved.   For each branch of the  tree $T_{AE}( \Theta ; p,z)$  we assign a
 sequence of limit groups $$F _{R(U_{1,i})},
F _{R(V_{2,i})}\ldots ,F _{R(V_{r,i})}, F _{R(U_{r,i})}$$
that we have just constructed,   as in \cite[Section 12.2]{KMel}. 
\begin{lemma}
The construction of the  tree $T_{AE}( \Theta ; p,z)$  for $F_n$ and a random group $\Gamma$ is the same w.o.p.
\end{lemma}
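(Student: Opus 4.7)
The plan is to prove the lemma by induction on the depth of $T_{AE}(\Theta; p,z)$, using Theorem \ref{prelim} and Theorem \ref{5.10} at each level, together with the already-established finiteness of the tree. The construction alternates between two kinds of objects at every node: a graded MR diagram that captures the specializations surviving the current $\forall\exists$ layer, and a graded formal MR diagram that describes the formal solutions $y$ produced by the implicit function theorem. It suffices to check that each of these objects, together with its terminal solid groups and their strictly solid families, agrees w.o.p.\ over $F_n$ and over $\Gamma_\ell$.

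For the base level, the MR diagram of $\Sigma(a,p,z,x,y)=1$ modulo $\langle a,p,z\rangle$ is the same w.o.p.\ by Theorem \ref{prelim}(1), giving identical terminal groups $F_{R(U_{1,i})}\ast F(x)$. The (a priori infinite) system on $F_{R(U_{1,i})}\ast F(x)$ asserting that $\Psi(p,z,x,f(a,p,z,z^{(1)},x))=1$ for every formal solution $f$ is equivalent over both $F_n$ and $\Gamma_\ell$ to the same finite subsystem w.o.p.\ by Theorem \ref{prelim}(2), yielding the same coordinate group $G$ and the same collection of limit groups $G_i$.

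For the inductive step, at each already-constructed node I would carry out three substeps and check that each produces identical output w.o.p.: (a) the graded resolutions modulo $\langle p,z\rangle$ and their terminal solid groups $F_{R(V_{r,i})}$ agree, because by Theorem \ref{prelim}(3) the relevant limits are limit groups over $F_n$ and by Theorem \ref{prelim}(5) the complete system of flexible (reducing) quotients coincides; (b) the graded formal MR diagram describing the formula solutions $y$ of $\Sigma(a,p,z,x,y)=1$ in the closures of these towers agrees by Theorem \ref{5.10}, which produces the same terminal solid groups $F_{R(U_{r,i})}$; (c) passing to the next layer requires recording those parameters for which $\Psi$ is trivialized by every formula solution, a step again handled by Theorem \ref{prelim}(2). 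The replacement of shortest form morphisms by minimal strictly solid solutions does no harm because Theorem \ref{prelim}(4) identifies the strictly solid families over $\Gamma_\ell$ with those over $F_n$ w.o.p.\ and preserves their counts, so any minimal representative on one side is matched by one on the other.

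The main obstacle is the probabilistic bookkeeping. Each invocation of Theorem \ref{prelim} or Theorem \ref{5.10} only holds w.o.p., that is, with probability tending to $1$ as $\ell \to \infty$. Because the finiteness of $T_{AE}(\Theta; p,z)$, proved exactly as for the $\forall\exists$-tree in \cite{KhS1}, bounds the total number of nodes and hence the total number of such invocations, a finite union bound suffices to conclude that all the relevant events hold simultaneously w.o.p. The two trees then coincide w.o.p., as claimed.
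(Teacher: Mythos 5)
Your proposal unwinds the paper's terse proof into a systematic induction on tree depth with explicit appeals to Theorem~\ref{prelim} and Theorem~\ref{5.10}, followed by a union-bound argument over the finitely many nodes. The paper's own proof is much shorter: it simply names two facts --- that non-taut solutions in $\Gamma_\ell$ almost surely have non-taut pre-images in $F_n$, and that flexible homomorphisms from a solid limit group to $\Gamma_\ell$ almost surely have flexible pre-images --- as the reasons the construction proceeds identically in the two settings. Your invocations of Theorem~\ref{prelim}(4)--(5) do cover the second fact (flexible/reducing quotients and strictly solid families match), but the first fact is not surfaced in your argument. The graded MR diagrams built at each level of $T_{AE}(\Theta; p,z)$ are \emph{taut} diagrams, and matching them over $F_n$ and $\Gamma$ requires not just that solutions lift (Theorem~\ref{prelim}(1)) or that solid data match, but that the taut/non-taut distinction on morphisms itself lifts w.o.p. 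Your remark about replacing shortest form morphisms by minimal strictly solid solutions touches the same territory but does not close this point; a sentence stating that non-tautness lifts w.o.p.\ (parallel to your handling of flexibility) is what the paper isolates as essential, and should be added. With that supplement your proof is sound; the explicit induction and union-bound step, which the paper leaves implicit, are a welcome clarification given the finiteness of the tree.
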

\begin{proof} This follows from the following two facts. 
 Non-taut solutions in $\Gamma _{\ell}$ almost surely have some non-taut pre-images (See definition of a taut morphism \cite[Definition 2.27]{KhS1}. If $K$ is a solid limit group relative to $H$, then flexible homomorphisms $K\rightarrow \Gamma _{\ell}$ almost surely have some flexible pre-images.
    
\end{proof}
 Corresponding irreducible systems of equations are: $$U_{1,i}=U_{1,i}(a,p,z,z^{(1)})=1,$$ which correspond to the terminal
groups of resolutions  describing $y$  of level
$1$,
 $$V_{m,i}=V_{m,i}(a,p,z,z^{(1)}, z_1^{(m)})=1,\ m=2,\ldots ,r$$
which correspond to the terminal groups of resolutions describing remaining $X$
 of level $m-1$ and 
$$U_{m,i}=U_{m,i}(a,p,z,z^{(1)},
z_1^{(m)}, z^{(m)})=1,\ m=2,\ldots ,r,$$ which correspond to the terminal
groups of resolutions describing $y$  of level
$m, m>1$, 
Systems $V_{m,i}=1$ correspond to vertices of  $T_{AE}( \Theta ; p,z)$ 
that have distance $m$ to the root.

For each $m$ the groups $F _{R(U_{m,i})}$ and $F _{R(V_{m,i})}$ are solid modulo the subgroup $\langle p,z\rangle .$ $F _{R(U_{m-1,i})}$ is mapped into $F _{R(V_{m,i})}$ compatibly. That is, the minimal subgraph of groups of the $p,z$-JSJ of $F _{R(V_{m,i})}$
containing the canonical image of $F _{R(U_{m-1,i})}$, consists only from parts that are taken from the $p,z$-JSJ of $F _{R(U_{m-1,i})}$.

 Below we will sometimes skip index $i$ and write $U_m=1,\
V_m=1$ instead of $U_{m,i}=1,\ V_{m,i}=1.$

\subsection{Configuration groups}

In order to show that a specialization $p_0$ of the parameters $p$ is in the set making the formula true, $True(\Theta)$, one needs to find a specialization $ z_0$ of variables $z$ such that the corresponding $\forall\exists$-sentence
$$ \forall X\exists y (\Sigma (p_0, z_0,x,y)=1\wedge \Psi(p_0,z_0,x,y)\not =1)$$
 is true.  The proof that this sentence is true corresponds to a subtree of $T_{AE}( \Theta ; p,z)$ (It is called in \cite{KMel} a true-subtree for $p_0,z_0$, Sela calls
each possibility for the structure of a proof encoded in the parametric $\forall\exists$-tree
{\em a proof system}.
). That is the proof consists of a finite sequence of formal solutions $y$ in closures of corresponding towers in the vertices of this subtree.  Since $T_{AE}( \Theta ; p,z)$ is a finite tree, there is only a finite number of possibilities for  such true-subtrees of $T_{AE}( \Theta ; p,z)$.  Note that given $ p_0, z_0$ there may be several 
 true-subtrees for this tuple, but the number of possible true-subtrees is bounded. We will say that the sentence  associated with the tuple 
 $p_0, z_0$  can be proved at level $m$ if the maximal depth of the true-subtree is $m$.
 
\begin{definition}
 Denote by $True(\Theta)_i$  ($True(\Theta )_{i \Gamma}$) the set of specializations $ p_0$ of $p$  for which there exists $z_0$ such that the corresponding $\forall\exists$-sentence can be proved on level $i$ (there are solutions to some $U_{i,j}=1$ but there are no remaining $x$ on level $i$ that is equivalent to the absence of solutions  to $V_{i+1,j}=1$).
\end{definition}

\begin{lemma} The set $True(\Theta)_1$ ($True(\Theta )_{1 \Gamma}$) is defined by an  $\exists\forall$-sentence,  the same for $F_n$ and $\Gamma$.
\end{lemma}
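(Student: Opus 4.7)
The plan is to exhibit a single $\exists\forall$-sentence $\Phi(p)$ that defines $True(\Theta)_1$ in $F_n$ and $True(\Theta)_{1\Gamma}$ in $\Gamma$ by the same formula. The ingredients of $\Phi$ come from the first two nodes of $T_{AE}(\Theta;p,z)$, so the coincidence of the sentence over the two groups will follow at once from the preceding lemma.

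I would first unwind what ``provable at level $1$'' means. The level-$1$ terminal groups $F_{R(U_{1,i})}\ast F(x)$ each carry a canonical formal solution $y_i=f_i(a,p,z,z^{(1)},x)$ with $\Sigma(a,p,z,x,f_i)=1$ identically; substituting into $\Psi$ yields the fixed word
$$W_i(a,p,z,z^{(1)},x)\;:=\;\Psi(a,p,z,x,f_i(a,p,z,z^{(1)},x)),$$
and the systems $V_{2,i}=1$ are, by construction, precisely the ones cutting out the \emph{remaining $x$'s}, i.e.\ those for which $W_i=1$. Consequently the $\forall\exists$-sentence attached to $(p_0,z_0)$ is provable at level $1$ exactly when there exist some index $i$ and some specialization $z^{(1)}_0$ satisfying $U_{1,i}(a,p_0,z_0,z^{(1)}_0)=1$ and $\forall x\; W_i(a,p_0,z_0,z^{(1)}_0,x)\neq 1$.

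Accordingly I would set
$$\Phi(p)\;:=\;\exists z\,\exists z^{(1)}\bigvee_{i}\Bigl(U_{1,i}(a,p,z,z^{(1)})=1\;\wedge\;\forall x\;W_i(a,p,z,z^{(1)},x)\neq 1\Bigr),$$
which, after collapsing the two leading existential blocks and noting that the finite disjunction and the systems are built from quantifier-free equations/inequations, is in $\exists\forall$-form. By the unwinding in the previous paragraph, $\Phi(p_0)$ holds in $F_n$ iff $p_0\in True(\Theta)_1$, and $\Phi(p_0)$ holds in $\Gamma$ iff $p_0\in True(\Theta)_{1\Gamma}$.

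The remaining point, which I expect to require the most care, is that the data $U_{1,i}$, $f_i$, and $W_i$ entering $\Phi$ are literally the same over $F_n$ and over $\Gamma$; this is exactly the content of the preceding lemma, augmented by Theorem \ref{5.10} (the graded formal MR-diagram at level $1$ is identical for $F_n$ and $\Gamma_\ell$ w.o.p.) and Theorem \ref{prelim}(1)--(2) (the MR-diagram of any fixed finite system is the same over both groups). The subtlest verification is that the universal statement $\forall x\;W_i\neq 1$ has the same content in the two models: this follows from Theorem \ref{prelim}(1)--(2), since any solution of $W_i=1$ (equivalently of $V_{2,i}=1$) over $\Gamma_\ell$ w.o.p.\ lifts to a solution over $F_n$, while any solution over $F_n$ projects to a solution over $\Gamma_\ell$. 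Hence $\Phi(p)$ is a single $\exists\forall$-sentence defining both $True(\Theta)_1$ and $True(\Theta)_{1\Gamma}$, as required.
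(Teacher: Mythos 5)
Your approach is the same in spirit as the paper's: exhibit a single $\exists\forall$-formula that unwinds the level-$1$ proof structure, with the coincidence over $F_n$ and $\Gamma$ coming from the identity of the MR and formal-MR diagrams. However, your formula differs from the paper's in two ways, the first of which is a genuine gap.

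First, the paper's formula explicitly requires the witness $z^{(1)}$ to be a \emph{strictly solid} (algebraic) solution of $F_{R(U_{1,i})}$, and your formula drops this condition. This is not a cosmetic omission: the proof systems encoded in $T_{AE}(\Theta;p,z)$ are built from algebraic specializations (see Definition~\ref{d2}, item (1)), and the transfer between $F_n$ and $\Gamma$ relies on Theorem~\ref{prelim}(4)--(5), which are statements about strictly solid families and reducing quotients, not about arbitrary solutions of $U_{1,i}=1$. A flexible $z^{(1)}_0$ can satisfy $U_{1,i}=1$ while factoring through a reducing quotient, so without the strict-solidity clause the formula does not correctly isolate the level-$1$ proof systems, and the two directions of the equivalence with $True(\Theta)_1$ are not both guaranteed. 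Note that the repair is cheap: ``$z^{(1)}$ is strictly solid'' is expressible as a universal formula (it does not satisfy any of the finitely many Diophantine conditions cutting out the reducing quotients), so adding it preserves the $\exists\forall$ shape.

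Second, you replace the paper's condition ``for all $j$, no $z_1^{(2)}$ with $V_{2,j}(a,p,z,z^{(1)},z_1^{(2)})=1$'' by ``$\forall x\; W_i(a,p,z,z^{(1)},x)\ne 1$'', where $W_i=\Psi(a,p,z,x,f_i)$ for a single canonical formal solution $f_i$. But the systems $V_{2,j}$ are constructed as the Zariski closure of the set of \emph{remaining} $x$, i.e.\ those $x$ for which $\Psi(p,z,x,f(z,z^{(1)},p,x))=1$ holds for \emph{every} formal solution $f$ associated with the resolution, and they involve extra tower variables $z_1^{(2)}$ beyond $x$. Your formulation quantifies over $x$ alone and fixes one $f_i$, which is a priori a strictly stronger condition than ``no remaining $x$.'' It may in fact coincide in this special case, but that coincidence is exactly what the construction of the $V_{2,j}$ systems packages; asserting it without invoking those systems leaves a verification step open. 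The paper's phrasing in terms of $V_{2,j}$ avoids both issues and aligns directly with what Theorem~\ref{prelim}(2) guarantees is transferable between $F_n$ and $\Gamma$.
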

\begin{proof}  We can write an $\exists\forall$- formula that says that there exists an index $i$, $z, z^{(1)}$ such that   $U_{1,i}(a,p,z,z^{(1)})=1$,
$z^{(1)}$ is a strictly solid solution, and for any index $j$ there is no specialization $z_1^{(2)}$ such that $V_{2,j}=V_{2,j}(a,p,z,z^{(1)}, z_1^{(2)})=1.$
\end{proof}
\begin{theorem} \label{main11} The set $True(\Theta)_2$ ($True(\Theta )_{2 \Gamma}$) is defined by a formula in the boolean algebra of   $\exists\forall$-sets, that is the same for $F_n$ and $\Gamma$.
\end{theorem}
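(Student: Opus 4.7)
The plan is to unpack the definition of level-$2$ provability in terms of the tree $T_{AE}(\Theta;p,z)$ and then collapse the resulting quantifier alternation using Theorem \ref{5.10}.

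By construction of the tree, a tuple $p$ lies in $True(\Theta)_2$ precisely when there exist $z,z^{(1)}$ and an index $i$ such that $U_{1,i}(a,p,z,z^{(1)})=1$ with $z^{(1)}$ strictly solid, and for every $z_1^{(2)}$ for which $V_{2,j}(a,p,z,z^{(1)},z_1^{(2)})=1$ for some $j$, there exist $z^{(2)}$ and $j'$ such that $U_{2,j'}(a,p,z,z^{(1)},z_1^{(2)},z^{(2)})=1$, $z^{(2)}$ is strictly solid modulo $\langle a,p,z\rangle$, and for all $z_1^{(3)}$ the systems $V_{3,k}(a,p,z,z^{(1)},z_1^{(2)},z^{(2)},z_1^{(3)})\neq 1$ for every $k$. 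On its face this is of shape $\exists\forall(\Rightarrow\exists\forall)$, one alternation more than $\exists\forall$.

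The reduction of the inner $\forall z_1^{(2)}\bigl(V_{2,j}=1\Rightarrow\exists z^{(2)}[\ldots]\bigr)$ is the key step, and proceeds via Theorem \ref{5.10} applied to the graded tower over the terminal solid group $F_{R(V_{2,j})}$ together with the system $\Sigma=1\wedge\Psi\neq 1$. That theorem produces finitely many graded formal resolutions whose formal solutions encode every possible strictly solid level-$2$ witness $z^{(2)}$. Thus the existential over $z^{(2)}$ is replaced by a finite disjunction, over a bounded list of formal-solution branches, of existential choices of parameters in a graded formal closure that do not depend on $z_1^{(2)}$. After this replacement the inner implication becomes a purely universal condition in $z_1^{(2)},z_1^{(3)}$: strict solidity is itself universal, since flexibility is described by a disjunction of systems of equations by Theorem \ref{prelim}(5), and the absence of $V_{3,k}$ solutions is universal by inspection. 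Each fixed choice of indices and formal-solution branches therefore yields an $\exists\forall$-formula, and the finite disjunction over all such choices defines $True(\Theta)_2$ as a boolean combination of $\exists\forall$-sets.

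The same boolean combination defines $True(\Theta)_{2\Gamma}$ because the underlying data coincide in the two settings. By the lemma preceding this theorem the tree $T_{AE}(\Theta;p,z)$ is identical over $F_n$ and $\Gamma$; by Theorem \ref{prelim} the solution sets, the strict-solidity bounds, and the flexible quotients of solid limit groups agree; and by Theorem \ref{5.10} the graded formal MR diagrams encoding the level-$2$ formal solutions coincide w.o.p. Hence the formula obtained from the above analysis is the same, and defines the same set w.o.p. in a random group as in $F_n$. The principal obstacle is the invocation of Theorem \ref{5.10} at level $2$: one must verify that the strictly solid witnesses $z^{(2)}$ parametrized by $z_1^{(2)}$ can be uniformly realized as specializations of formal solutions from a finite list of graded formal closures above the tower with terminal group $F_{R(V_{2,j})}$, so that the $\forall\exists$ genuinely collapses to $\exists\forall$. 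This is the level-$2$ analogue of the Merzljakov-style collapse used in the level-$1$ lemma, applied one level up in the tree.
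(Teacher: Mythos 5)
Your proposal identifies the right shape of the problem (an inner $\forall z_1^{(2)}\exists z^{(2)}\forall z_1^{(3)}$ alternation that must be flattened) and the right tool (Theorem \ref{5.10}), but it stops short at exactly the point where the actual work begins, and the step you call ``the principal obstacle'' is not resolved by anything you write; it is merely asserted to be analogous to the level-$1$ case. That analogy is false: at level~$1$ the defining formula is already $\exists z,z^{(1)}\bigl(U_{1,i}=1\wedge\text{str.\ solid}\wedge\forall z_1^{(2)}\ V_{2,j}\neq 1\bigr)$, i.e.\ an $\exists\forall$-formula with no alternation to collapse, so there is no ``Merzljakov-style collapse used in the level-$1$ lemma'' to appeal to one level up.

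The concrete gap is in the sentence ``the existential over $z^{(2)}$ is replaced by a finite disjunction \dots\ of existential choices of parameters in a graded formal closure that do not depend on $z_1^{(2)}$.'' Theorem \ref{5.10} does produce finitely many graded formal closures with formal solutions $y_i(a,p,x,x^{(1)},s)$, but choosing which closure, which $\Gamma$-algebraic solution of the terminal solid base, and which value of the extra parameters $s$ to use is an existential choice whose correctness for a given $(p_0,z_0,z_0^{(1)})$ depends on a covering-closure condition over the \emph{entire} family of $z_1^{(2)}$'s, not on a single one. That condition is not universal in the original variables; it interacts with the strict-solidity, the number of inequivalent strictly solid families (your ``width'' $i_s$), and the possibility that a generic family of pseudo-proofs fails while non-generic proofs still exist. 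This is why the paper introduces configuration groups, certificates with properties (1)--(6) (Definition \ref{d2}), the PS resolutions and the failure analyses F1--F4, the Extra PS and Collapse Extra PS resolutions, Proposition \ref{genfam}, Lemma \ref{main4}, and ultimately the sieve procedure (with a non-trivial termination argument in Section \ref{sieve}). None of this is recoverable from a single invocation of Theorem \ref{5.10}; your proposal effectively assumes the conclusion of the sieve procedure in the phrase ``genuinely collapses to $\exists\forall$.'' The part of your argument that \emph{is} correct and matches the paper is the transfer to $\Gamma$ once the formula is in hand: Theorem \ref{prelim}, the lemma on identity of $T_{AE}(\Theta;p,z)$, and Theorem \ref{5.10} together guarantee that each step of the construction proceeds identically over $F_n$ and $\Gamma$, so the \emph{same} boolean combination works in both settings. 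But you need to actually produce that boolean combination, and for that the certificate/sieve machinery is not optional.
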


The rest of the section will be devoted to the proof of this theorem, and at the very end we will discuss the general case of 
$True(\Theta)_m$, where $m$ is bounded by the depth of the tree $T_{AE}( \Theta ; p,z).$
\begin{definition}  \label{d2} Let $p_0\in True(\Theta)_2$. Then there exists a family of  algebraic specializations $(p_0,{z_0}, {z_0}^{(1)})$ for one of the groups $F _{R(U_{1,k})}$.  Let $F _{R(V_{2,1})},\ldots ,F _{R(V_{2,t})}$ be the whole family
of groups on level 1 of the proof constructed for this group $F _{R(U_{1,k})}$ ($k$ is fixed).  To construct the {\em initial
resolutions of level} 2 and {\em width} $i=i_1+\ldots
+i_t$, we consider the resolutions modulo the subgroup
$\langle p\rangle$ for limit groups $H$ discriminated
by $i$ solutions $(p_0,{z_0},{z_0}^{(1)},{(z_0)_1}^{(2,j,s)},{z_0}^{(2,j,s)})$ of the systems
$$U_{2,m_s}(a,p,z,z^{(1)},z_1^{(2,j,s)},z^{(2,j,s)})=1,\ j=1,\ldots ,i_s,\ s=1,\ldots ,t,$$ with the following properties (for some $i_1,\ldots ,i_t$   such  solutions must exist):

(1) $(p_0, {z_0}, {z_0}^{(1)})$ are algebraic (=strictly solid)  solutions of $U_{1,k} (a,p,z,z^{(1)})=1$, 

\noindent
$(p_0, {z_0}, {z_0}^{(1)},(z_0)_1^{(2,j,s)})$ are algebraic solutions of $V_{2,s} (a,p,z,z^{(1)}, z_1^{(2)})=1$,  

\noindent
 $(p_0, {z_0}, {z_0}^{(1)},(z_0)_1^{(2,j,s)},{z_0}^{(2,j,s)})$ are algebraic solutions of $U_{2,m_s} (a,p,z,z^{(1)}, z_1^{(2)}, z^{(2)})=1;$

(2) $ (z_0)_{1}^{(2,j,s)}$ are not equivalent to $ (z_0)_{1}^{(2,p,s)},
p\neq j,\ p,j=1,\ldots, i_s,\ s=1,\ldots ,t$;

(3) for any of the finite number of values of $z_{1}^{(2)}$
the resolutions for $V_{2,s}(a,p
,z,z^{(1)},z_{1}^{(2)})=1$ are contained in the union
of the resolutions for $U_{2, m_s}(a,p
,z,z^{(1)},z_{1}^{(2,j)},z^{(2,j)})=1$ for
different values of $z^{(2,j,s)}$;

(4) there is no non-equivalent $(z_0)_{1}^{(2,i_{s}+1,s)}$, algebraic,
solving $V_{2,s}(p,z,z^{(1)}, z_1^{(2)})=1, s=1,\ldots ,t$.

(5) the solution $(p_0,{z_0},{z_0}^{(1)})$  does not
satisfy a proper equation which for some $i\in J$ implies  $\overline V_i(p_0,{z_0}, x)=1$  for any value of
$x$.

 (6)  for any $s$,  the solution $(p_0
,{z_0},{z_0}^{(1)},(z_0)_{1}^{(2,1,s)},{z_0}^{(2,1,s)})$  can not be
extended to a solution of some
$$V_{3,s}(a,p,z,z^{(1)},z_{1}^{(2,1,s)},z^{(2,1,s)},
z_{1}^{(3,1,s)})=1.$$

We call this group $H$ a {\bf configuration group}\index{Configuration group}. We also call a tuple $$(p_0, {z_0},{z_0}^{(1)},(z_0)_{1}^{(2,j,s)},{z_0}^{(2,j,s)},\ j=1,\ldots i_s,\ s=1,\ldots ,t)$$ satisfying the conditions above {\em a certificate} for $\Theta$ for $p_0$ (of level 2  and width $i$).  A certificate is the same as a valid PS statement \cite[Definition 1.23]{Sela5}. 
 We add to the generators of the configuration group additional variables $q$ for the primitive roots of a fixed set of elements for each certificate (these are primitive roots of the images in $F$ of the edge groups and abelian vertex groups in the relative JSJ decompositions of the groups $F _{R(V_{2,1})},\ldots ,F _{R(V_{2,t})}$). 
 
 Similarly we define a $\Gamma$-certificate for $\Theta$ for $p_0$ over $\Gamma$.\end{definition}
 
  Note that the structure of a certificate depends only on the
proof system, and not on the particular specialization. The (finite) set of configuration groups is the Zariski closure of all the set of certificates. Each configuration group $H$ in this definition is a limit group and a coordinate group of some system of equations, say $$W(p,z,z^{(1)},z_1^{(2,j,s)},z^{(2,j,s)},  q, \ j=1,\ldots i_s,\ s=1,\ldots ,t)=1.$$   We construct for $H$ the canonical taut graded
M-R diagram with respect to the parameter subgroup $P=\langle p\rangle$. The resolutions in this diagram are called PS resolutions. Let $Res(p,z,u)$ be one of resolutions for $H$. Here $u$ includes ${z}^{(1)},(z)_{1}^{(2,j,s)},{z}^{(2,j,s)},\ j=1,\ldots i_s,\ s=1,\ldots ,t$.  When we consider such an initial resolution of level 2 and width $i=i_1+\ldots +i_t$, we have fixed the group
 $F _{R(U_{1,k})}$, and the number $i_s$ of equivalence classes of algebraic solutions for each of the groups $F _{R(V_{2,s})}$  Let $Comp(p,z,u,v)$ be a completion of this resolution.

 The configuration groups are the input for the so called {\em sieve procedure} in \cite{Sela5} (they are the input for the Projective tree in \cite{KMel}).  The goal
of the sieve procedure is to show that the collection of specializations of
the parameters $p$, for which there exists a certificate that
factors through the given configuration group $H$, is in the Boolean algebra of $\forall\exists$
sets.
 
 Not each specialization of $H$
 that has a structure of a certificate (pseudo-certificate)  is, actually, a certificate. We start
the sieve procedure with each of the completions $Comp(p,z,u,v)$  in parallel.
 \begin{lemma} PS resolutions encode all the $\Gamma _{\ell}$ specialisations for which the restriction on  the configuration groups represent $i$ distinct families of algebraic solutions of $V_{2,s} (a,p,z,z^{(1)}, z_1^{(2)})=1$.
 \end{lemma}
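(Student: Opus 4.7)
The plan is to combine the uniform encoding of solution sets in the Makanin--Razborov diagram (Theorem~\ref{prelim}(1)--(2)) with the uniform treatment of strictly solid families (Theorem~\ref{prelim}(4)) and the preservation of taut morphisms that was established in the previous lemma.

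First, I would recall that by construction $H$ is a limit group, and the PS resolutions are by definition the branches of the canonical taut graded Makanin--Razborov diagram of $H$ modulo the parameter subgroup $P = \langle p\rangle$. Since $H$ is the coordinate group of some finite system $W(p,z,u,q) = 1$, Theorem~\ref{prelim}(1)--(2) guarantees that, w.o.p., the set of $\Gamma_\ell$-solutions of $W=1$ is encoded by this very diagram. Consequently every $\Gamma_\ell$-specialization of $H$ factors, w.o.p., through one of the finitely many PS resolutions.

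Next, I would verify that the qualitative data attached to a certificate --- taut-ness, strict solidity (algebraicity) of the marked tuples, and the requirement that the $(z_0)_1^{(2,j,s)}$ together with $z_0^{(2,j,s)}$ represent $i_s$ pairwise distinct strictly solid families of solutions of $V_{2,s}$ --- all transfer correctly between $F_n$ and $\Gamma_\ell$. Taut-ness transfers by the lemma already proved in this section (non-taut $\Gamma_\ell$-specializations admit non-taut preimages). Strict solidity and the combinatorial count of distinct families transfer by Theorem~\ref{prelim}(4), which asserts that strictly solid $\Gamma_\ell$-representatives lift to strictly solid $F_n$-representatives of pairwise distinct families, and that the upper bound $N$ on the number of families agrees on both sides w.o.p. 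Hence any $\Gamma_\ell$-specialization whose restriction to the configuration group exhibits $i = i_1 + \cdots + i_t$ distinct algebraic families of the systems $V_{2,s}=1$ lifts to an $F_n$-specialization with exactly the same combinatorial type; the latter factors through one of the PS resolutions by the standard Makanin--Razborov construction over $F_n$; and this factorization then descends to $\Gamma_\ell$ by the encoding statement above.

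The main obstacle is purely combinatorial bookkeeping: one must apply Theorem~\ref{prelim}(4) uniformly over the finitely many solid groups $F_{R(V_{2,s})}$ for $s = 1,\dots,t$, and over all representatives $j = 1,\dots,i_s$, while simultaneously tracking taut-ness and strict solidity at each node. Because the failure event at each step is of probability tending to zero and only finitely many such steps arise in the construction of a single PS resolution, the conjunction of the required good events still has probability tending to one, so the encoding holds w.o.p.\ as claimed.
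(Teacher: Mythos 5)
Your proof is correct and follows essentially the same route as the paper's (much terser) argument: lift the $\Gamma_\ell$-specialization to an $F_n$-preimage, observe that it factors through the taut graded M--R diagram over $F_n$, and conclude that the PS resolutions encode it. The paper gives this in a single sentence; you additionally spell out why the relevant qualitative features (tautness, strict solidity, distinctness of the $i$ families) are preserved under lifting, citing Theorem~\ref{prelim}(4) and the preceding lemma, which is exactly the implicit content behind the paper's one-liner.
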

 \begin{proof} Any pre-image of such a specialization factors through this M-R diagram.
 \end{proof}
 We can assume that for all $s=1,\ldots ,t$, all the mappings of $F_{V_{2,s}}$ into any level of $Comp(p,z,u,v)$ are compatible with the $pz$-JSJ decomposition of $F_{V_{2,s}}$.

Some generic families of specializations $$(p_0,{z_0},{z_0}^{(1)},(z_0)_1^{(2,j,s)},{z_0}^{(2,j,s)},  q, \ j=1,\ldots i_s,\ s=1,\ldots ,t)$$ in $F_n$ that factor through the fixed  initial completed
resolution $Comp(p,z,u,v)$  (modulo $\langle p\rangle $, with terminal group $Term(p, p^{(1)})$) of level 2 and width $i$, may fail to be certificates for $p_0$ in one of the following ways:

F1.  Specializations in a generic family for  $Comp(p,z,u,v)$ may not satisfy property (1),  instead of being algebraic (=strictly solid) they may become reducing (=flexible), or two not equivalent specializations may become equivalent (failing (2)). These specializations factor through a finite number of  limit groups $Red_1,\ldots ,Red _k$ (extra variables may be added to demonstrate this). By the Implicit Function Theorem the towers for them are closures (=corrective extensions) of the completion  $Comp(p,z,u,v)$. A certificate does not factor through any of 
$Red_1,\ldots ,Red _k$.

F2. Specializations for which property (3) fails  factor through a finite number of limit groups with additional equations involving root variables $Q$, $RQ_1,\ldots ,RQ_m$.

F3.  Specializations may not satisfy property (6), in this case they can be extended to  specializations that  factors through a finite number of limit groups of higher level $F _{R(V_{3,m_s})}$.

Notice that all the specializations that can be extended to specializations that factor through these limit groups $Red_1,\ldots ,Red _k$,  $F _{R(V_{3,m_s})}$, $RQ_1,\ldots ,RQ_m$,  which are closures of the NTQ group for $Comp(p,z,u,v)$, are not certificates.

The system of equations corresponding to $Term(p,p^{(1)})$ has a bounded number of algebraic solutions. If each of these algebraic solutions can be extended to a specialization that factors through the terminal group of one of the corrective extensions above, then no certificate factors through $Comp(p,z,u,v)$. 

In the Cases F1-F3 a pseudo-certificate is not a certificate if it  satisfies an existential positive formula from a given finite family, say $\exists \bar w NC(p,z,u,v, \bar w)=1$. We call specializations  satisfying this formula, collapsed. We have constructed bundles for which, if a generic pseudo-certificate (= generic family of pseudo-certificated) fails to be a certificate, then the whole fiber does not contain certificates. 

F4. In the next bundles it may be that a generic pseudo-certificate is not a certificate, but they still contain a non-generic certificates. Namely, generic family may fail property (4) in the following way. For tuples $p_0,{z_0},{z_0}^{(1)}$ there may be more  non-equivalent algebraic solutions of the equation $V_{2,s}(p,z,z^{(1)}, z_1^{(2)})=1$ than $i_s$. These tuples then factor through one of the groups $H_{surplus}$ discriminated by solutions of $W=1$ together with solutions $z_1^{(2,i_s+1,s)}\rightarrow F$ minimal with respect to fractional Dehn twists. We construct the  well separated graded formal M-R diagram for these generic families \cite[Definitions 2.25-2.27]{KhS1}. This is the {\em Extra PS M-R diagram}. Every resolution of this diagram is  a graded closure of  $Comp(p,z,u,v)$. These graded closures have solid terminal groups  modulo $\langle p\rangle$, that we denote by $Term_1(p,p^{(1)},p^{(2)})$. The group   $Term(p,p^{(1)})$  is mapped compatibly into $Term_1(p,p^{(1)},p^{(2)})$. Namely, the minimal subgraph of groups of the $p$-JSJ decomposition of  $Term_1(p,p^{(1)},p^{(2)})$ consists only from the vertices and edges from the $p$-JSJ of $Term(p,p^{(1)})$.

Now we construct the Extra PS M-R diagram for $\Gamma$. We look at the collection of specializations $\{(p_n,z_n,u_n,v_n, p_n)\}$ over the accending sequence $\{\Gamma _{\ell _n}\}$ for which
the sequence restricts to a generic family of 
$Comp(p,z,u,v)$, specialization  $\{(p_n,z_n,u_n,v_n, p_n)\}$ is not collapsed and there are more non-equivalent algebraic solutions of the equation $V_{2,s}(p,z,z^{(1)}, z_1^{(2)})=1$ than $i_s$.
 We construct the  well separated graded formal MR diagram over $\Gamma$ for this collection of specializations. Terminal groups of the resolutions in this diagram are graded closures of  $Comp(p,z,u,v)$ over $\Gamma$. 
 \begin{theorem} \cite[Theorem 6.8]{Mass2} The same graded formal closures of $Comp(p,z,u,v)$ serve as terminal groups for resolutions in Extra PS MR diagram for $\Gamma$ and for $F_n$.
      \end{theorem}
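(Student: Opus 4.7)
The plan is to reduce the claim to the lifting facts collected in Theorem~\ref{prelim} together with the formal MR diagram transfer of Theorem~\ref{5.10}, applied one more time in the graded setting. The Extra PS MR diagram is determined by a collection of specialization sequences satisfying three conditions: (a) the restriction to $Comp(p,z,u,v)$ is a generic family; (b) the specializations are non-collapsed (none of the failure formulas from cases F1--F3 is satisfied); and (c) there exist more non-equivalent algebraic solutions of $V_{2,s}(p,z,z^{(1)},z_1^{(2)})=1$ than $i_s$. Since the whole terminal output depends only on the limit group(s) obtained by Bass-Serre type analysis of such sequences, my strategy is to show that a sequence of $\Gamma_{\ell_n}$-specializations satisfying (a)--(c) can almost surely be lifted to an $F_n$-sequence satisfying the analogous (a)--(c), and conversely.

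First I would set up the lifting. For each $n$, lift $(p_n,z_n,u_n,v_n,p_n^{(2)})$ to an $F_n$-tuple via the quotient $F_n\to\Gamma_{\ell_n}$. By Theorem~\ref{prelim}(2) the lift of a specialization of the finite system cutting out $Comp(p,z,u,v)$ is almost surely a specialization of the full infinite system, so genericity in $\Gamma_{\ell_n}$ lifts to genericity in $F_n$. By Theorem~\ref{prelim}(4), strictly solid families in $\Gamma_{\ell_n}$ correspond bijectively (w.o.p.) to $F_n$-strictly solid families, with distinct $\Gamma$-families coming from distinct $F_n$-families; thus the existence of more than $i_s$ non-equivalent algebraic solutions in $\Gamma_{\ell_n}$ lifts to the same count in $F_n$. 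By Theorem~\ref{prelim}(5) and the lemma in Section~4 (flexible/non-taut homomorphisms to $\Gamma_{\ell}$ have flexible/non-taut preimages), the non-collapsed condition is preserved under lifting.

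Next, I would run the well-separated graded formal MR diagram construction (as in \cite[Definitions 2.25--2.27]{KhS1}) on the lifted sequences in $F_n$. Since the input data --- generic families, algebraic families on $V_{2,s}$, compatibility with the $pz$-JSJ of $F_{R(V_{2,s})}$ --- is the same up to the lifting bijection, the limit groups $H_{surplus}$, their $p$-JSJ decompositions, and the graded closures of $Comp(p,z,u,v)$ produced as terminal groups $Term_1(p,p^{(1)},p^{(2)})$ are literally the ones obtained over $F_n$. For the reverse inclusion, given a graded formal closure appearing as a terminal group over $F_n$, Theorem~\ref{5.10} (applied with the graded resolution being $Comp(p,z,u,v)$ and the system being the one encoding ``more than $i_s$ non-equivalent algebraic solutions of $V_{2,s}$'') guarantees that $\Gamma_{\ell}$-sequences of specializations realizing this closure factor through the graded formal MR diagram constructed over $F_n$; combined with Theorem~\ref{prelim}(4) ensuring that algebraic $F_n$-families project to algebraic $\Gamma$-families, one gets that every such $F_n$-closure is actually realized as a terminal group of the Extra PS MR diagram over $\Gamma$.

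The main obstacle I anticipate is the ``well-separated'' bookkeeping. Two families that are distinct at the $F_n$-level must stay distinct at the $\Gamma_{\ell}$-level (otherwise the resolution count over $\Gamma$ could collapse) and vice versa; moreover the compatibility of the mappings $F_{R(V_{2,s})}\to Term_1$ with the relative $p$-JSJ must survive the lift. The distinctness issue is settled by the second part of Theorem~\ref{prelim}(4), which says that distinct strictly solid families remain in distinct families under lifting. The JSJ compatibility reduces to the fact that the canonical Grushko and JSJ decompositions of a limit group depend only on the limit group, not on whether it arose as a limit of $F_n$- or $\Gamma_{\ell_n}$-specializations (Theorem~\ref{prelim}(3)). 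Once these are in place, the two diagrams share the same terminal graded formal closures.
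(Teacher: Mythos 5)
The paper does not prove this statement itself; it is cited as \cite[Theorem 6.8]{Mass2}, so there is no in-paper proof to compare against. Judged on its own, your plan --- lift $\Gamma_{\ell_n}$-specializations to $F_n$, preserve the defining conditions (generic, non-collapsed, surplus algebraic solutions), run the well-separated graded formal construction, and transfer back --- is broadly the natural one in this framework. But there is a real gap in the forward step: you deduce ``genericity in $\Gamma_{\ell_n}$ lifts to genericity in $F_n$'' from Theorem~\ref{prelim}(2), yet (2) is only a Diophantine lifting statement (a $\Gamma$-solution of a finite system has an $F_n$-preimage solution). Being a test sequence is not a property of one tuple but an asymptotic, geometric condition on a sequence (shortening along the tower, growth of lengths, convergence of Bass--Serre actions), and nothing in (2) says the chosen preimages assemble into an $F_n$-test sequence. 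The paper is careful about exactly this point: it invokes a separately defined notion of graded test sequence over random groups (\cite[Def.~3.5]{Mass2}), built from $\Gamma$-algebraic families in the bottom of the tower rather than from lifted $F_n$-test sequences. You would need to argue explicitly that the projection $F_n\to\Gamma_{\ell_n}$ preserves the test-sequence conditions at the scale of the specializations involved, which is an additional geometric input not contained in Theorem~\ref{prelim}.

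A second, related issue is your use of Theorem~\ref{5.10} for the reverse inclusion. That theorem concerns the graded formal MR diagram of an equational system $\Sigma(a,p,x,y)=1$. The Extra PS condition --- existence of more than $i_s$ non-equivalent strictly solid families of solutions of $V_{2,s}=1$ --- is not a Diophantine condition but a $\forall\exists$ one; the paper passes to the auxiliary limit groups $H_{surplus}$ via the ``minimal with respect to fractional Dehn twists'' normalization precisely so that the well-separated formal machinery becomes applicable. Plugging ``the system encoding more than $i_s$ non-equivalent algebraic solutions'' directly into Theorem~\ref{5.10} is therefore not a valid substitution; one must first identify $H_{surplus}$ and its taut graded MR diagram, show via Theorem~\ref{prelim}(1),(3),(4),(5) that these coincide over $F_n$ and $\Gamma$, and only then compare the resulting graded closures.
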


Some tuples for $Comp(p,z,u,v)$ factoring through $H_{surplus}$ may still be certificates provided they are  going through one of the resolutions for which value of $z_1^{(2,i_s+1,s)}$ is either reducing or equivalent to one of $z_1^{(2,j,s)}, j=1, \ldots ,i_s.$ %These tuples do not form a generic family. 
These are {\em Collapsed Extra PS resolutions}.

This analysis shows the following for $F_n$ and $\Gamma$.
\begin{prop} \label{genfam} For each initial completed
resolution of level 2 and width $i$,  for each  $p_0$ factoring through this resolution for which there exists a certificate, there are the following possibilities: 
 \begin{enumerate}\item  there exists 
 a generic family for the resolution for which the restricted specializations are certificates, 
 \item any certificate in this resolution can be extended by $z_1^{(2,i_s+1,s)}$ so that the whole tuple factors through one of the groups $H_{surplus}$, but these variables  $z_1^{(2,i_s+1,s)}$  factor through one of the non-generic resolutions for which $z_1^{(2,i_s+1,s)}$  is either reducing or equivalent to one of $z_1^{(2,j,s)}, j=1, \ldots ,i_s.$\end{enumerate}
In the former case we say that the resolution has depth 1, in the latter case
we will consider resolutions of depth 2  (for level 2 and width $i$).  Similarly there may be constructed resolutions
of larger depth (for level $2$ and width $i$).
\end{prop}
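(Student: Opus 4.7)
The plan is to fix an initial completed resolution $Comp(p,z,u,v)$ of level $2$ and width $i=i_1+\ldots+i_t$, fix $p_0$ admitting a certificate through it, and analyze the generic family of the fiber over $p_0$ by going through the failure modes F1--F4 already tabulated in the excerpt. I expect the argument to go through verbatim for $F_n$ and for $\Gamma$, the only novelty being that shortest-form morphisms are replaced by minimal strictly solid solutions (as in \cite[Def. 2.28]{KhS1}), and that the transfer between the two settings is provided by Theorem \ref{5.10}, by Theorem 6.8 of \cite{Mass2}, and by Theorem \ref{prelim}(4)--(5).

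First I would examine a generic family of specializations of $Comp(p,z,u,v)$ restricting to $p_0$. If these are already certificates we are in case (1); otherwise some failure mode among F1--F4 is triggered generically. I would then rule out F1, F2, F3: each is witnessed by a collapse formula $\exists \bar w\, NC(p,z,u,v,\bar w)=1$ which, if satisfied on the generic family of the fiber, is satisfied on every specialization in the fiber, by the bundle-collapse statement already in the excerpt. This would force the whole fiber to contain no certificate, contradicting the hypothesis on $p_0$. Hence the only possible generic failure is F4.

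In case F4 the obstruction is a surplus of strictly solid classes of $z_1^{(2,i_s+1,s)}$ in the generic family. I would then invoke the Extra PS M-R diagram: by the construction in the excerpt and by Theorem 6.8 of \cite{Mass2}, its well-separated graded formal closures of $Comp(p,z,u,v)$ classify all extensions $(p,z,u,v,z_1^{(2,i_s+1,s)})$ into $H_{surplus}$ whose restriction is generic in $Comp(p,z,u,v)$, uniformly over $F_n$ and $\Gamma$. For a certificate $(p_0,z_0,u_0,v_0)$, condition (4) of Definition \ref{d2} forbids a genuinely new strictly solid class $z_1^{(2,i_s+1,s)}$ over this tuple, so every extension landing in $H_{surplus}$ must have $z_1^{(2,i_s+1,s)}$ either reducing or equivalent to one of $z_1^{(2,j,s)}$, $j\le i_s$. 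Such extensions are exactly those captured by the Collapsed Extra PS resolutions, which is case (2).

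The main obstacle is this last step: one must verify that the Extra PS M-R diagram captures \emph{every} algebraic extension of a certificate into $H_{surplus}$ (not only generic ones), and that the compatibility $Term(p,p^{(1)})\hookrightarrow Term_1(p,p^{(1)},p^{(2)})$ along the minimal $p$-JSJ subgraph really forces a collapsed placement of the new variable. The random-group version of this step requires Theorem \ref{prelim}(4), so that the bound on strictly solid families, hence the count of equivalence classes underlying condition (4) of the certificate, is preserved w.o.p.\ from $F_n$ to $\Gamma_\ell$. For resolutions of depth $\ge 2$ the same dichotomy is then applied recursively to each Collapsed Extra PS resolution, termination following from the same complexity invariant that terminates the sieve procedure of \cite{Sela5} and the projective tree construction of \cite{KMel}.
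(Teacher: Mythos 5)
Your proposal reconstructs, in essentially the same order and with the same tools, the analysis the paper gives just before stating Proposition~\ref{genfam}: the paper does not have a self-contained proof paragraph but derives the proposition from the preceding discussion of the failure modes F1--F4, the observation that for F1--F3 a generic collapse forces the whole fiber to contain no certificate, and the construction of the Extra PS and Collapsed Extra PS resolutions for F4. Your reduction to the dichotomy (generic certificates in case (1); generic failure only through F4 with the surplus variable forced to collapse in case (2)), and your use of Theorems~\ref{prelim}(4)--(5), Theorem~\ref{5.10}, and the Extra PS diagram for the transfer to $\Gamma$, match the paper's reasoning.

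One remark on your ``main obstacle'' paragraph: the worry that the Extra PS M-R diagram might only capture generic extensions is addressed, implicitly, by the standard exhaustiveness of graded formal M-R diagrams (every specialization over the completion of the required type factors through some branch of the diagram, not only test sequences), and the collapsed behaviour of the new variable on a given certificate is then read off from which branch it factors through. You correctly identify Theorem~\ref{prelim}(4) as what is needed to preserve the bound on strictly solid families over $\Gamma_\ell$ w.o.p., so that condition~(4) of Definition~\ref{d2} carries the same content in both settings; this is indeed the paper's intended justification.
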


Properties of the Collapse Extra PS Diagram that is the same for $F_n$ and $\Gamma$, are described in \cite[Theorem 7.7]{Mass2}.

For a given value of $p$ the formula $\Theta$
can be proved on level  2 and depth 1
 if and only if  the following
conditions are satisfied.
\begin{enumerate}

\item [(a)] There exist algebraic solutions  for some system of equations corresponding to  the terminal group $Term (p, p^{(1)})$ of a fundamental
sequence $Comp(p,z,u, v)$ for a configuration group modulo $p$.

\item [(b)] At least one of these algebraic solutions  cannot be extended to  solutions of equations corresponding to terminal groups of  resolutions for $Red_1,\ldots ,Red _k$,  $F _{R(V_{3,m_s})}$, $RQ_1,\ldots ,RQ_m$.

\item  [(c)] These solutions do not factor through 
 the terminal groups of any of the Extra PS resolutions
derived from  $Comp(p,z,u, v)$.

\end{enumerate}

In this case there is a generic certificate of level $2$ width $i$ and depth 1.
\begin{lemma} \label{main4} Conditions (a)-(c) can be described by a boolean combination of  $\exists\forall$-formulas of type (\ref{AE113}) if the theory has generators of $F_n$ as constants and 
by a boolean combination of formulas 
\begin{equation}\label{AE123}
\exists x\forall y \left( (\vee _{i=1}^m U_i(p,x)=1)\wedge (\vee _{j=1}^n V_j(p,x,y)\neq 1)\right ),
\end{equation} where $U_i(p,x)=1$ are systems of equations,  $V_j(p,x,y)$ are collections of words, 
if it does not.

\end{lemma}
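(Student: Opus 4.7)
The plan is to exhibit a single $\exists\forall$-formula in the required normal form that is equivalent to the conjunction (a)$\wedge$(b)$\wedge$(c), thereby realizing the lemma by a trivial boolean combination. The existential block will quantify over the data of a candidate algebraic certificate $p^{(1)}$ for the terminal group $Term(p,p^{(1)})$ of $Comp(p,z,u,v)$, while the universal block will rule out all of the ``bad extensions'' catalogued in cases F1--F4.

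First I would encode (a), the existence of an algebraic (strictly solid or rigid) solution. By Theorem \ref{prelim}(5), the list of reducing/flexible quotients of the solid limit group attached to $Term(p,p^{(1)})$ is finite and identical for $F_n$ and $\Gamma$; call the corresponding systems $Q_1(p,p^{(1)},y),\dots,Q_s(p,p^{(1)},y)$. Strict solidity amounts to ``$U(p,p^{(1)})=1$ and $p^{(1)}$ does not factor through any $Q_j$'', i.e. $U=1$ together with the universal clause $\forall y\,\bigwedge_j Q_j(p,p^{(1)},y)\neq 1$. To guarantee genuine membership in a new family, not a repetition of a previously exhibited one, I use the bound $N$ from Theorem \ref{prelim}(4) and append a universal clause of the same shape forbidding Dehn-twist equivalence to any of a fixed finite list of collapsing specializations, using the primitive-root variables $q$ adjoined in Definition \ref{d2}.

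Next I would encode (b) and (c) by the same device. The finitely many corrective extensions of F1--F4 --- the reducing quotients $Red_1,\dots,Red_k$, the higher-level groups $F_{R(V_{3,m_s})}$, the root-equation closures $RQ_1,\dots,RQ_m$, and the terminal groups of the Extra PS resolutions --- are each presented by a finite system $E_\ell(p,p^{(1)},w)=1$. ``The chosen $p^{(1)}$ cannot be extended to any of them'' is then the universal conjunction $\forall w\,\bigwedge_\ell \neg(E_\ell(p,p^{(1)},w)=1)$, which merges with the universal clauses from (a). Pulling the existential quantifier on the certificate data outside and the universals on the witness data inside yields a formula of the form (\ref{AE113}) (when the generators of $F_n$ are available as constants) or, after splitting each equation system into finitely many components, of the form (\ref{AE123}).

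The main obstacle I anticipate is the fine-grained accounting of strict solidity: two strictly solid specializations may represent the same family via a fractional Dehn twist on edge and abelian vertex groups, and this equivalence must be expressed as an equation involving the primitive-root variables $q$. This is precisely the reason the configuration group in Definition \ref{d2} is augmented by $q$; once they are present, being in the same family becomes a polynomial condition, being in distinct families is its negation, and both fit inside a universal block without disturbing the $\exists\forall$-shape. Everything else is bookkeeping, given that the relevant finite lists of flexible quotients, closures, and Extra PS terminal groups have already been shown (Theorem \ref{prelim}(4)(5), Theorem \ref{5.10}, and the Extra PS MR diagram of Section \ref{sec:6}) to coincide for $F_n$ and $\Gamma$, so that the same boolean combination of $\exists\forall$-formulas works over both.
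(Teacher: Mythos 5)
Your proposal takes a genuinely different route from the paper. You aim for a single $\exists\forall$-formula of the shape $\exists (p^{(1)},q)\,[\text{alg}(p,p^{(1)},q)\wedge\neg\text{bad}(p,p^{(1)})]$, expressing (a)$\wedge$(b)$\wedge$(c) directly. The paper instead uses the bound $N$ on the number of strictly solid families (Theorem~\ref{prelim}(4)) to restate (a)$\wedge$(b) as the negation of a disjunction over $s\le N$ of the statements ``there are exactly $s$ algebraic solutions of the coefficient system and each of them extends to a solution of one of the bad terminal systems.'' It then relies on the building block, from the analysis of rigid and solid limit groups in \cite{KMel}, that ``there are at least $s$ algebraic solutions'' is expressible in the precise form~(\ref{AE123}) with $m=1$, and that appending the ``each extendable to bad'' clause stays in form~(\ref{AE123}) with $m>1$; the boolean combination then arises from the counting and its negations. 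Your single existential is logically equivalent to the paper's boolean combination, so the two approaches should define the same set; what the paper's counting buys is that each constituent piece is already known to have the normal form~(\ref{AE123}), and the stratification of $p$-specializations by the number of strictly solid families that the paper needs immediately after the lemma.

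The gap in your argument is precisely the normal-form claim at the end. After pulling the existential outside and combining the universals, your matrix has the shape $\bigl(U=1\bigr)\wedge\bigwedge_j\forall y_j\, Q_j(p,p^{(1)},q,y_j)\neq 1\wedge\bigwedge_\ell\forall w_\ell\, E_\ell(p,p^{(1)},w_\ell)\neq 1$, i.e.\ a conjunction of co-Diophantine conditions inside a single $\exists$. The form~(\ref{AE123}) (or~(\ref{AE113})) requires, after the universal quantifier, a \emph{disjunction} $\vee_j V_j\neq 1$, equivalently a single $\neg\exists y\,W(p,x,y)=1$; a conjunction of ``no extension solves $Q_j$'' over several $j$ is not of that shape, and taking the union of the systems in disjoint variables converts ``for every $j$, $Q_j$ has no solution'' into ``for some $j$, $Q_j$ has no solution,'' which is the wrong Boolean connective. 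Because the existential on $(p^{(1)},q)$ is shared, you also cannot distribute it over the conjunction to write the formula as a conjunction of separate~(\ref{AE123})-formulas. ``Splitting each equation system into finitely many components'' addresses the equational part (the disjunction $\vee_i U_i=1$), not this. To close the gap you would either have to cite or reprove the normal-form fact for counting algebraic solutions that the paper uses, or show directly that the conjunction can be folded into a single co-Diophantine condition, neither of which your proposal does. A secondary point: your reduction of strict solidity to ``does not factor through $Q_1,\dots,Q_s$'' skips the minimality with respect to the modular group (you gesture at fractional Dehn twists via the $q$-variables, but the paper's use of the $N$-bound is exactly what lets it avoid spelling out this definability and replace it with counting families).
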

\begin{proof}  If the theory has generators of $F_n$ as constants, formula (\ref{AE123}) can be rewritten in the form (\ref{AE113}) (see \cite[Section 3.1]{Imp}). Therefore it is sufficient to prove the statement for formulas (\ref{AE123}).  

Let $N$ be the  bound on the number of algebraic solutions of $U_{i,coeff}=1,$ where $Term (p, p^{(1)})=F_{R(U_{i,coeff})}$.  Conditions (a) and (b) on $p$ can be restated as negation of the following: for any   $s\leq N$ there are exactly $s$ algebraic solutions of $U_{i,coeff}=1$ and each of them can be extended to  solutions of equations corresponding to terminal groups of  resolutions for some of $Red_1,\ldots ,Red _k$,  $F _{R(V_{3,m_s})}$, $RQ_1,\ldots ,RQ_m$. 

The statement: there are at least $s$ algebraic solutions $y_1,\ldots ,y_s$ of $U_{i,coeff}=1$  can be expressed by $\exists\forall$-formula of type (\ref{AE123}) with $m=1$. Te statement that there are no $s+1$ algebraic solutions of 
$U_{i,coeff}=1$ is the negation of the statement that there are at least $s+1$ such solutions.

The statement: there are at least $s$ algebraic solutions $y_1,\ldots ,y_s$ of $U_{i,coeff}=1$ and  each $y_i$ can be extended to  solutions of equations corresponding to terminal groups of  resolutions for some of $Red_1,\ldots ,Red _k$,  $F _{R(V_{3,m_s})}$, $RQ_1,\ldots ,RQ_m$,  can be  expressed by $\exists\forall$-formula of type (\ref{AE123}).  Notice that $m$ can be greater than 1 here because each $y_i$ can be extended to  a disjunction of solutions of equations. 

Condition (c) on algebraic solutions $y_1,\ldots ,y_s$  can be also expressed by formulas of type  (\ref{AE123}).
\end{proof}
Similarly we consider resolutions of level 2 width $i$ and depth 2 and deeper resolutions of level 2 width $i$.  The sieve procedure of constructing deeper and deeper resolutions of level 2 and width $i$ stops. To prove termination of this iterative
procedure in the minimal (graded) rank case (section 1 in \cite{Sela5}), one  uses
the strict decrease in the complexity of the resolutions associated with successive
steps of the procedure.  
 In general case we will give a hint of what is involved in the proof of its termination in Section \ref{sieve}. 

With each of the  resolutions constructed along
the sieve procedure, we associate a set TSPS$(p)$, that contains the set
of specializations of the defining parameters $p$ that have a generic family
of certificates that factor through that  resolution (i.e.
a generic certificate). The sets TSPS$(p)$ associated with the
various resolutions constructed along the sieve procedure are
all in the Boolean algebra of $\forall\exists$ sets (as follows from Lemma \ref{main4}), and if a specialization $p_0$ has 
a certificate, then $p_0$ is contained in at least one of the sets TSPS$(p)$
constructed along the sieve procedures associated with the configuration groups. 

As the proof of  Lemma \ref{main4} shows,  for any specialisation $p_0$ of the defining parameters in some set TSPS$(p)$, there are finitely many combinations for the ungraded resolutions covered by a given graded resolution and not covered by another graded resolution.  These finitely many possibilities form a stratification on the set of specialisations of the defining parameters, obtained from the bases of all the graded resolutions that have been constructed in the process. A specialisation $p_0\in \Theta (p)$ if and only if it belongs to a certain strata in the combined stratification. The set of specialisations $p_0$ associated with a given combination of strictly solid families of solutions of finitely many solid limit groups can be defined using disjunctions of conjunctions of $\forall\exists$ and $\exists\forall$ predicates. 

\begin{theorem}\label{main3}
 Every definable set in $F_n$ (and its image in $\Gamma$) can be defined by the same boolean combination of
formulas (\ref {AE113}) if the language contains generators of $F$ as constants, and by a boolean combination of existential formulas and
formulas (\ref {AE123})
if the language does not have constants other than the identity element.
\end{theorem}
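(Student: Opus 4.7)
The plan is to leverage the analysis of the generic three-quantifier formula $\Theta(p)$ carried out in Section~\ref{sec:6} and then bootstrap to arbitrary definable sets via the Sela--Kharlampovich--Myasnikov quantifier elimination. First, by \cite{Sela5.2,KMel} every first-order formula in the language of $F_n$ is equivalent to a boolean combination of $\exists\forall$-formulas. After appending dummy quantifiers, one may assume each atomic piece has the shape (\ref{38neg}) of $\Theta(p)$. Thus it suffices to show that for every such $\Theta$ the truth set $True(\Theta)$ in $F_n$ is defined, by the same boolean combination of formulas of type (\ref{AE113}) (or (\ref{AE123}) in the constant-free language), also in $\Gamma_\ell$ with overwhelming probability.

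The key step is to run the sieve procedure uniformly over $F_n$ and $\Gamma$. Starting from the parametric tree $T_{AE}(\Theta;p,z)$, which is identical in both settings by the lemma of Section~\ref{sec:6}, one forms the finitely many configuration groups, their canonical taut graded M-R diagrams, the initial completed resolutions $Comp(p,z,u,v)$, the Extra PS and Collapse Extra PS diagrams, and then iterates to resolutions of higher depth and width. At every stage the inputs are M-R diagrams, relative JSJ decompositions, strictly solid families of solid limit groups, and graded formal closures: each is provably the same for $F_n$ and $\Gamma_\ell$ by Theorem~\ref{prelim}, Theorem~\ref{5.10}, and \cite[Theorem~6.8]{Mass2}. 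In particular, every set TSPS$(p)$ attached to a sieve resolution is defined by exactly the same predicate over $F_n$ and over $\Gamma_\ell$ w.o.p.

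Next, one repackages each TSPS$(p)$ into a boolean combination of $\exists\forall$-formulas. Lemma~\ref{main4} does exactly this at level~$2$ and depth~$1$; iterating its argument at higher levels and depths yields, for every sieve resolution, a boolean combination of formulas from the advertised list. Since $\forall\exists$-sentences are negations of $\exists\forall$-sentences, the disjunctions of conjunctions of $\forall\exists$- and $\exists\forall$-predicates that describe the final stratification collapse to a boolean combination of formulas of type (\ref{AE113}) or (\ref{AE123}) alone. Summing over the finitely many $\Theta$-atoms of the original formula produces a boolean combination that defines an arbitrary definable set simultaneously over $F_n$ and over $\Gamma_\ell$.

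The principal obstacle is the termination of the sieve, whose proof is sketched in Section~\ref{sieve} along the lines of the minimal-rank argument of \cite{Sela5}. One must verify that the strict-decrease invariant---graded rank, JSJ complexity, and the compatible embedding $Term(p,p^{(1)})\hookrightarrow Term_1(p,p^{(1)},p^{(2)})$---behaves identically for $F_n$ and $\Gamma_\ell$. This is the most delicate point but once again reduces to Theorem~\ref{prelim}(4)--(5) together with the matching of graded formal closures used in the construction of the Extra PS diagram. Once termination is established with the same bound as over $F_n$, the boolean combinations assembled above define the same set over $F_n$ and over $\Gamma_\ell$ w.o.p., completing the proof.
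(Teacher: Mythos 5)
Your proposal reproduces the paper's argument in substance. Like the paper, you run the sieve procedure simultaneously over $F_n$ and $\Gamma$, using the identity of the parametric $\forall\exists$-tree, the configuration groups, the PS/Extra PS/Collapse Extra PS diagrams, and the transfer results (Theorem~\ref{prelim}, Theorem~\ref{5.10}, \cite[Theorem~6.8]{Mass2}) to conclude that each TSPS$(p)$ is defined by the same predicate over both; you then package the sets via Lemma~\ref{main4} and its higher-level analogues and finish with the same termination discussion deferred to Section~\ref{sieve}.

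The one place where your presentation differs slightly is the opening move: you invoke the quantifier elimination of \cite{Sela5.2,KMel} over $F_n$ as a starting point and then reduce to $\Theta(p)$-shaped formulas. The paper does not use that result as a black box; it derives the boolean combination directly from the sieve procedure run on $\Theta(p)$, which is what one must do anyway, since the claim is that the \emph{same} boolean combination (the one produced by the sieve) defines the set over both structures. Since in the body of your argument you do re-run the sieve rather than merely citing the $F_n$-result, this is a presentational rather than a substantive difference, and your proof goes through.
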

\begin{proof} Let us recall what we just did. Given
a fixed proof system, we collected all its certificates in a canonical collection of finitely many configuration groups.
 With each of the configuration groups we associate a sieve procedure. We started the analysis of the set of initial graded $p$-PS resolutions of level 2, by associating with
each such resolution a finite collection of reference resolutions, that collect
all the generic points (=generic families =test sequences) that factor through a given graded $p$-PS resolution
for which the corresponding pseudo-certificate (associated with the generic
point) is in fact a fake certificate. We did that by collecting all the generic families
that factor through a PS resolution for which the corresponding
pseudo-proofs are fake proofs (F1-F4), and use graded formal
limit groups, to associate a finite collection of graded resolutions
to this collection of generic families.
By construction, every value $p_0$ of the parameters $p$, for which
there exists a generic point that does not factor through one of the 
resolutions F1-F4, is necessarily in the set $\Theta (p)$ (since by Proposition \ref{genfam} it has a test
sequence (= generic family) of certificates). 

The constructed F1-F4 resolutions collect all the generic points for
which the associated pseudo-certificates are in fact fake certificates. For F1-F3, that are described by a finite disjunction of Diophantine conditions, if the pseudo proof associated with a generic point
in a fiber is fake, the pseudo proofs in the whole fiber are fake.  For others
(Extra PS), it may be that a fiber contains certificates
though the specializations associated with a generic point in the fiber are fake.
Hence,  one needs to collect all
such certificates. One does that by collecting all the pseudo certificates
in such resolutions, for which the reason that a generic pseudo certificate fails,
collapses. Such a collapse can be described by the
union of finitely many Diophantine conditions, which are therefore the same for $F_n$ and $\Gamma$, and the collection of pseudo
certificates that satisfy one of these Diophantine conditions is a Diophantine
set, with which we  associate finitely many limit groups, that we
called Collapse extra PS limit groups. One  starts the first step of the sieve
procedure with these groups.
The first step of the sieve procedure studies the structure of the obtained
Collapse extra PS limit groups (and their well separated resolutions) in comparison with
the PS resolutions (of the configuration groups) from which they were constructed.

  Since all the well separated resolutions correspond to some Diophantine conditions imposed on the previous well separated resolutions,  their construction is the same for $F_n$ and $\Gamma$. Moreover the finitely many solid limit groups that are in the bases of these resolutions are also the same for $F_n$ and $\Gamma$. A family of non-equivalent strictly solid solutions over $\Gamma$ has all the pre-images non-equivalent strictly solid.  And if two strictly solid solutions are equivalent over $\Gamma$ then w.o.p. they have some pre-images that are equivalent over $F_n.$

  Similarly we consider well separated resolutions of level 2 width i and depth 2 and deeper
resolutions of level 2 width i. The construction is very technical and we will hint in the next section why  the sieve procedure of constructing deeper and deeper
resolutions of level 2 and width $i$ stops. Theorems \ref{prelim} and \ref {5.10} ensure that the procedure for $F_n$ and $\Gamma$ goes the same way. This proves Theorem \ref{main11}.

Dealing similarly with resolutions of a level higher than 2, one proves Theorem \ref{main3}.
\end{proof} 

Now Theorem \ref{main3} implies Theorem \ref{main}.
\section{Termination of the sieve procedure in general case} \label{sieve} 

We recall the notion of complexity of a resolution  ($Cmplx (Res)$) at follows:
$$Cmplx(Res) =(dim (Res) + factors (Res), (size (Q_1),\ldots ,size (Q_m), ab(Res)),$$
where $factors (Res)$ is the number of freely indecomposable, non-cyclic terminal factors (this component only appears in graded resolutions),  and   $$(size (Q_1),\ldots, size (Q_m))$$ is the regular size of the resolution \cite{KMbook}. The complexity is a tuple of numbers which we compare in the left lexicographic order. The number
 $dim (Res) + factors (Res)$ is called the Kurosh rank of the resolution.

For convenience, we adopt the terminology of \cite{Sela5.2}, which the third author is more accustomed to and which provides terminology for structures some of which were  used but left unnamed in \cite{KMel}.

Suppose a configuration group is fixed, and we are considering the set $True (\Theta)_2$ ($True (\Theta)_{2\Gamma}$. On the first step of the sieve procedure one  starts by looking at the Zariski closure of all the pseudo certificates
that can be extended to specializations of a given Collapse extra PS limit
group. This Zariski closure is  contained in the variety associated
with the configuration group we started with. If the limit
group that is associated with the Zariski closure is a proper quotient of the configuration group, we replace the configuration group with the limit
group that is associated with the Zariski closure.  The descending
chain condition  for limit groups guarantees that such a
replacement can be done only finitely many times along the entire procedure.

For the rest of the first step we  assume that there is no decrease
in the Zariski closure of the Diophantine set of pseudo certificates.
By construction, there is a natural map from the completion of the PS
resolution we started with to a given Collapse extra PS limit group. One 
then looks at the multi-graded resolutions of the given Collapse extra
PS limit group with respect to the (images of the) non-QH, non-abelian
vertex groups in the top level of the completion of the PS resolution we
started with (more presicely, one uses auxiliary resolutions to do it with respect
to the images of the non-QH, non-abelian vertex groups and edge groups
in the graded abelian JSJ decomposition of the original configuration group).
 We use  tight enveloping NTQ groups \cite{KMel} or \cite[Section 3.5.3]{KMbook} and their resolutions (similar to core resolutions that were presented in \cite[Section 4]{Sela5}.  The complexity of each such multi-graded core resolution can be bounded by the complexity of the top level of the completion of the initial PS resolution, which, in turn, essentially corresponds to the complexity of the graded abelian JSJ decomposition of the configuration group.
 The image of the original configuration group in the terminal limit group of an obtained multi-graded resolution
is a proper quotient of the initial configuration group we started with. If there is a decrease in the complexity of an obtained multi-graded resolution
(in comparison with the complexity of the top level of the completion of
the corresponding PS resolution), then we consider a so-called developing resolution, which
is a completed  resolution
induced by the original configuration group from the given multi-graded
resolution, followed by a $p$-graded resolution from the taut M-R diagram of the image of the configuration group in
the terminal level of the given multi-graded resolution. (In \cite{KMel} there is a similar construction that is called a {\em projective image}).
 The so-called anvils  are maximal limit quotients of the amalgamated product of the completion
of the given multi-graded resolution with the completion of the developing resolution 
amalgamated along the common part of the developing resolution.

If there is no decrease in the complexity of an obtained multi-graded resolution,
it is guaranteed that the structure of the tight enveloping (core) resolution associated
with such a multi-graded resolution is identical to the structure of the top
level of the completion of the initial PS resolution. In this case we continue to multi-graded resolutions associated with lower
levels in the completion of the initial PS resolution. If there is
a decrease in complexity, one associates anvils and developing resolutions with
the obtained multi-graded resolutions, precisely as we constructed them in
case of a complexity decrease in the top level.

Suppose there exists a multi-graded resolution in which the complexity of the tight enveloping (core) resolutions remains unchanged across all levels. In this case, we enlarge the collection of resolutions under consideration and examine all graded resolutions (with respect to the defining parameters 
$p$
) whose formed part of the tight enveloping (core) resolutions at each level shares the same structure as the formed part of the abelian decompositions associated with the corresponding levels in the completion of the initial PS resolution. Here, the formed part of an abelian decomposition refers to the portion consisting of its QH and abelian vertex groups. (see \cite[Definition 4.17]
{Sela5}).
One sets the ambient resolution to be the anvil. For each such graded
resolution we look at the core resolutions associated with its various parts.
If for some part the complexity of a core resolution associated with it is
strictly smaller than the complexity of the corresponding level of the completion
of the PS resolution, we set the ambient resolution to be a carrier,
and the resolution induced by the image of the original PS limit group to
be the developing resolution. If no reduction of complexity occurs, we set
the resolution associated with the image of the original PS limit group to
be a sculpted resolution, and set the developing resolution to be the ambient
resolution.
To conclude the first step, we associate with each developing resolution
(that is naturally mapped into the anvil), a finite collection of reference
resolutions F1-F4, in the same way we associated these with the original PS resolutions.
With each anvil we further associate a Collapse extra PS limit
groups, that collects those specializations of the bad reference fibers, for
which the reason for the failure of a generic pseudo-proof collapses, in
the same way we associated these with the original PS resolutions.

One continues with the sieve procedure iteratively, taking into account all the constructions that were made along the different branches of the
procedure in all the previous steps. We suppose that with each anvil and
its associated developing resolution that were constructed in step $n-1$ of
the sieve procedure, there is an associated positive integer called width that
denotes the number of  (nested) sequences of core resolutions associated with the anvil.
Width$(n-1)=1$ if and only if no sculpted resolution is associated with the
anvil. In case width $(n-1)>1$, with the (nested) collection of sequences of core resolutions associated with the anvil, there are
associated sculpted resolutions and possibly carriers. We start the general
step of the sieve procedure with the (finite) collection of anvils constructed
at the previous step of the procedure, and their associated Collapse extra
PS limit groups.

The general step of the procedure is essentially similar to the first step,
though it is technically more complicated since it has to take into account all the
previous steps of the procedure. Like in the first step, we start by looking at the
Zariski closure of all the virtual proofs that can be extended to specializations
of a given Collapse extra PS limit group. If this Zariski closure is
strictly contained in the variety we started the first step with, we continue
by replacing the configuration group with the limit group that is associated with
the Zariski closure (which is in this case a proper quotient of the configuration 
group we started with), and start again the first step of the sieve procedure
with this Zariski closure. As we have already noted, by the descending chain condition for limit
groups this can happen at most finitely many times along a branch of the
sieve procedure.
By construction, there are  maps from the completions of the
various resolutions constructed in previous steps of the procedure into the
Collapse extra PS resolution. As in the first step, we proceed by analysing
the multi-graded resolutions of the given Collapse extra PS limit group
with respect to the images of these resolutions. If there is a decrease in the
complexity of an obtained multi-graded resolution (in comparison with the
complexity of the top level of the completion of the corresponding resolution),
we continue by associating with the multi-graded resolution a finite
collection of anvils and developing resolutions in a similar way to what we
did in the first step.
If there is no decrease in the complexity of an obtained multi-graded resolution,
then the structure of the tight enveloping (core) resolution linked to
such a multi-graded resolution is the same as  the structure of the tight enveloping (core)
resolution associated with the corresponding multi-graded resolution that
was constructed in the previous step of the procedure. Then we continue
to multi-graded resolutions associated with lower levels in the anvil
we started the general step with. If there is a decrease in complexity we link
anvils and developing resolutions with the obtained multi-graded
resolutions, the same way as we constructed them in case of a complexity decrease
in the top level.

Assume there exists a multi-graded resolution in which the complexity of the core resolutions remains unchanged at all levels. In that case, we proceed to analyse the resolutions associated with the next sculpted resolution - if such resolutions are linked to the original anvil.

If, at any point, we observe a drop in the complexity of a resolution tied to one of the algebraic envelopes (i.e., nested sequences of core resolutions) associated with the original anvil, we construct a finite collection consisting of anvils, developing resolutions, sculpted resolutions, and carriers - mirroring the construction carried out in the initial step.

On the other hand, if no such decrease in complexity occurs, we augment the collection of algebraic envelopes related to the starting anvil. We then either:
\begin{itemize}
\item declare the new algebraic envelope to be a carrier for a sculpted resolution - provided that the resolution linked to it leads to a drop in complexity, or

\item define a new, additional sculpted resolution.
\end{itemize}
In both scenarios, we construct a finite system of anvils and developing resolutions, associating them with the newly introduced algebraic envelopes, carriers, and sculpted resolutions.

As in the first step, each developing resolution (naturally mapping into the anvil) is assigned a finite collection of reference resolutions, constructed in the same manner as those linked to the original PS resolution. Additionally, with each anvil, we associate a collection of Collapse Extra PS Limit Groups, which capture the specializations of the problematic reference fibers - those for which the generic pseudo-proof collapses - again following the same procedure used in the initial stage.

To prove termination of the iterative
procedure in the minimal graded rank case (section 1 in \cite{Sela5}), one  uses
the strict decrease in the complexity of the resolutions associated with successive
steps of the procedure.  
In
the procedure used to construct the parametric $\forall\exists$ tree  one  proves termination by combining the decrease
in either the Zariski closures or the complexities of the resolutions and
decompositions associated with successive steps of the procedure.

In the general step of the sieve procedure, it may occur that neither the Zariski closures nor the complexities of the various core and developing resolutions linked to a given anvil show any reduction. To handle this situation, the procedure introduces supplementary algebraic envelopes, sculpted resolutions, and carriers, all of which are assigned to the anvil in question. Consequently, beyond the reasoning used to demonstrate the termination of the parametric $\forall\exists$-tree construction, an additional step is required to establish the termination of the sieve procedure: namely, one must show that there is a uniform bound on the number of sculpted resolutions that can be associated with an anvil. The method for proving this bound parallels the approach used to establish a global bound on the number of rigid and strictly solid families of specializations of rigid and solid limit groups, as developed in \cite{KMel} and the first two sections of \cite{Sela}.

\section{Appendix: On the density parameter.}

In \cite{Mass1} Massalha extended the main result of \cite{KhS} to the case $d<1/2$. That is, he proved.

\begin{theorem}(\cite[Theorem 10.1]{Mass1})
Let $d<1/2$. Let $V(\bar{x}, e_1, \ldots, e_n) = 1$ be a system of equations over $\mathbb{F}_n=\langle e_1, e_2, \ldots, e_n\rangle$. Suppose $\Gamma_\ell$ is a random group of density $d$ at length $\ell$ and $\pi_\ell: \mathbb{F}_n \rightarrow \Gamma_\ell$ is the canonical quotient map.

Then, every solution $\bar b_\ell$ of $V(x,e_1, \ldots, e_n) = 1$ in $\Gamma_\ell$ is the image of a solution $\bar c_\ell$ of $V(x,e_1, \ldots, e_n) = 1$ in
$\mathbb{F}_n$, under the canonical quotient maps, i.e. $\pi_\ell(\bar c_\ell)=\bar b_\ell$, with probability approaching 1 as $\ell$ goes to infinity.    
\end{theorem}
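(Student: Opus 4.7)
The plan is to argue by contradiction along the lines of the $d<1/16$ proof in \cite{KhS}, replacing the explicit $C'(1/6)$ small cancellation by Ollivier's sharp isoperimetric inequality. Suppose the theorem fails: then there exist $\varepsilon>0$ and a subsequence $\ell_n\to\infty$ for which, with probability at least $\varepsilon$, $\Gamma_{\ell_n}$ admits a solution $\bar b_{\ell_n}$ of $V(\bar x,\bar e)=1$ that does not equal $\pi_{\ell_n}(\bar c)$ for any solution $\bar c$ of $V=1$ in $\mathbb{F}_n$. View each $\bar b_{\ell_n}$ as a homomorphism $h_{\ell_n}:F_m=\langle \bar x\rangle\to \Gamma_{\ell_n}$, precompose with the inner automorphism $\tau_{f_n}$ realising the minimum in the definition of the stretching factor $\mu_n$, and split on the asymptotics of $\mu_n$.

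If $\mu_n=o(\ell_n)$ along a subsequence, then lifting each generator image $\tau_{f_n}\circ h_{\ell_n}(x_u)$ to a shortest preimage in $\mathbb{F}_n$ produces a tuple $\bar c_n$ for which the word $V(\bar c_n,\bar e)$ has length at most $|V|\cdot \mu_n<\ell_n$ for $n$ large. Since this length is strictly less than the girth of $\Gamma_{\ell_n}$ (which is bounded below by $(1-2d-\delta)\ell_n$ w.o.p.\ for any $\delta>0$ by Ollivier's isoperimetric inequality) and the word maps to the identity in $\Gamma_{\ell_n}$, it already equals $1$ in $\mathbb{F}_n$; so $\bar c_n$ lifts $\bar b_{\ell_n}$, contradicting non-liftability. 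Otherwise $\liminf \mu_n/\ell_n>0$, and after further subsampling $\mu_n/\ell_n\to \lambda\in(0,\infty]$. Rescale the word metric of $\Gamma_{\ell_n}$ by $1/\mu_n$ and invoke the Bestvina--Paulin compactness statement recalled just before Theorem~\ref{prelim}: the sequence $\tau_{f_n}\circ h_{\ell_n}$ subconverges to an isometric nontrivial action of $F_m$ on a pointed real tree $(Y,y_0)$, yielding a limit group $L=F_m/\ker$ which by Theorem~\ref{prelim}(3) is a limit group over $\mathbb{F}_n$. The coordinate group of $V=1$, through which $h_{\ell_n}$ factors, therefore fits into the Makanin--Razborov diagram of $V=1$ over $\mathbb{F}_n$, and Theorem~\ref{prelim}(1)--(2) guarantee that w.o.p.\ the same diagram encodes the $\Gamma_{\ell_n}$-solutions; so $\bar b_{\ell_n}$ lifts, again a contradiction.

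The delicate case is $\lambda\in(0,\infty)$, where the rescaled relators have finite positive length in the limit and must be shown not to obstruct tree convergence by producing nontrivial graph-of-groups stabilisers. For $d<1/16$ this is handled in \cite{KhS} directly by $C'(1/6)$ small cancellation; for the full range $d<1/2$ one substitutes Ollivier's sharp isoperimetric inequality, namely that every reduced van Kampen diagram over $\Gamma_{\ell_n}$ of boundary length $L$ has area at most $L/\bigl((1-2d)\ell_n-o(\ell_n)\bigr)$ w.o.p.\ This forces, after rescaling, any relator contribution to the limit $F_m$-action to collapse to a trivial stabiliser and leaves a genuine tree action, allowing Theorem~\ref{prelim} to close the argument. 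The main obstacle is therefore this intermediate scaling regime; the short-stretch case is immediate from the girth bound, and the very long-stretch case ($\mu_n\geq \ell_n^2$) is a direct application of the scaling-limit statement quoted above.
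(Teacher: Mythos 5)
Your outline shares the shortening-argument starting point with the real proof, but it has two serious gaps. First, invoking Theorem~\ref{prelim}(1)--(3) to close the long-stretch case is circular: the proof of part~(1) of that theorem literally begins ``Since for a given system of equations the solution over $\Gamma$ is almost surely the same as over $F_n$ (we say that it can be lifted into $F_n$), it is encoded by the same M-R diagram.'' In other words, the M-R machinery over $\Gamma$ is a \emph{consequence} of the lifting theorem you are trying to prove, not an available ingredient for it. Second, and more fundamentally, nothing in your argument produces a probability estimate. The statement is that a certain event has vanishing probability, and the actual proof (outlined in the Appendix) is a counting-versus-probability argument in the Gromov--Ollivier style: (i) a shortening argument bounds every short non-free morphism $h_\ell: G_V \to \Gamma_\ell$ by $|h_\ell|\le K(\ln^2\ell)\ell$ w.o.p.\ (the double-logarithmic relaxation of the linear bound in \cite{KhS} is precisely what lets one drop the eighth-group/$C'(1/6)$ hypotheses and reach $d<1/2$); (ii) one counts abstract van Kampen diagrams with boundary $\le K(\ln^2\ell)\ell$ and $\le R\ln^2\ell$ faces, getting roughly $\bigl(K(\ln^2\ell)\ell^7\bigr)^{\ln^3\ell}B_{\ln^3\ell}$ of them; (iii) each diagram is fulfillable with probability $\le (2m-1)^{-\ell(1/2-d)}$, which dominates the count. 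Producing a limit tree along a hypothetical sequence of non-liftable solutions, as you propose, would at most show that such solutions give rise to limit groups; it does not show they occur with probability tending to $0$, which requires the quantitative interplay of (ii) and (iii).

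Two smaller points. In the $\mu_n=o(\ell_n)$ case, precomposing with $\tau_{f_n}$ also conjugates the images of the constants $e_i$, so $\tau_{f_n}\circ h_{\ell_n}$ need not solve $V(\bar x,\bar e)=1$; the shortening for systems with coefficients must use modular automorphisms fixing the coefficient subgroup, not arbitrary conjugations of $\Gamma_{\ell_n}$. And your regimes do not actually cover the range $\ell_n\lesssim\mu_n\lesssim\ell_n^2$: the girth argument requires $\mu_n=o(\ell_n)$, while the quoted Bestvina--Paulin statement requires $\mu_n\ge\ell_n^2$. That intermediate range, where the hyperbolicity constant $\delta_\ell\sim\ell$ is comparable to the stretch and the standard rescaled-limit argument breaks down, is exactly where \cite{KhS} and \cite{Mass1} do the genuine work, and the isoperimetric inequality alone does not make it disappear.
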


Massalha uses the general framework of \cite{KhS}, but in addition he cleverly exploits some results in hyperbolic geometry. We explain the common strategy and the ideas of the above result, adapting it to our language. 

The proof splits in two main parts. Note that this is not very different from Gromov's original approach (see \cite{Olli} Ollivier and Gromov \cite{Gromov}) for proving that a random group is hyperbolic. The first part is to prove a local-to-global result (in order to bound the number of van Kampen diagrams), and the second part is to calculate the probability that some (of boundedly many) van Kampen diagram is fulfillable. %The first part is the hard part. 

We recall Gromov's local-to-global principle (suitably adjusted). Intuitively, it says that in order to check the hyperbolicity of a (length) metric space, one needs to check the hyperbolicity locally, i.e. in bounded balls of fixed diameter. Ollivier calls the following theorem Cartan-Hadamard-Gromov-Papasoglu theorem. For the notation to make sense we note in passing that $|D|$ denotes the number of faces and $|\vartheta D|$ the length of the exterior boundary of the van Kampen diagram $D$.

\begin{theorem}
For each $\alpha>0$, there exist an integer $K:=K(\alpha)\geq 1$ and an $\alpha'>0$ such that, if a group is given by relations of length $\ell$, for some $\ell$, and if any reduced van Kampen diagram with at most $K$ faces satisfies $|\vartheta(D)|\geq \alpha \ell |D|$, then any reduced van Kampen diagram satisfies $|\vartheta(D)|\geq \alpha' \ell |D|$
\end{theorem}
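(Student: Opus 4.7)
The plan is to proceed by strong induction on $|D|$. The base case $|D|\leq K$ is immediate from the hypothesis, provided $\alpha'\leq \alpha$. For the inductive step, assume $|D|>K$; the strategy is to split $D$ along a short curve into two strictly smaller sub-diagrams to which the inductive hypothesis applies, and then add the two local inequalities.

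For the cutting step, I would fix a base vertex $x_0$ of $D$ and examine the concentric combinatorial balls $B(x_0,r)$ inside the $1$-skeleton of $D$. A pigeonhole argument on the annuli $B(x_0,r+1)\setminus B(x_0,r)$ for $r$ ranging over an interval of length proportional to $K$ produces some radius $r_0$ at which the frontier $\gamma$ separating the ball from its complement in $D$ has combinatorial length $O(\ell|D|/K)$. Cutting $D$ along $\gamma$ yields two reduced van Kampen sub-diagrams $D_1$ and $D_2$ with $|D_1|+|D_2|=|D|$, each of strictly smaller face-count, and
\[
|\vartheta D_1|+|\vartheta D_2| \;\leq\; |\vartheta D|+2|\gamma|.
\]
Applying the inductive hypothesis to each $D_i$ and summing gives
\[
\alpha'\ell|D| \;\leq\; |\vartheta D|+2|\gamma| \;\leq\; |\vartheta D|+C\ell|D|/K
\]
for an absolute constant $C$, so choosing $K=K(\alpha)$ large enough and any $\alpha'<\alpha-C/K$ absorbs the cut term and closes the induction.

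The main obstacle, as in every Papasoglu-style argument, is the combinatorial bookkeeping at the cut: one must verify that the two pieces $D_1,D_2$ are genuinely reduced van Kampen diagrams (no cancelling pair of faces is produced across $\gamma$, which can be enforced by choosing $\gamma$ to avoid cancellation pairs), that the frontier can be chosen to consist of a single simple closed curve (so that the diagram splits into exactly two pieces and the $|\vartheta D_i|$ tally is accurate), and that the topology of the cut does not spoil the additivity $|D_1|+|D_2|=|D|$. Once these technicalities are in place the iterative contradiction argument is standard, and the result is exactly the combinatorial Cartan--Hadamard principle from which the theorem takes its name.
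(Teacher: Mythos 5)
The paper quotes this theorem from Gromov/Papasoglu (via Ollivier's survey) without proof, so there is no in-paper argument to compare against. Your proposed proof, however, has a genuine gap, and it is precisely the gap that makes the local-to-global principle nontrivial: the induction does not close. After cutting $D$ along $\gamma$ into $D_1,D_2$ and applying the inductive hypothesis $|\vartheta D_i|\geq\alpha'\ell|D_i|$ to \emph{both} pieces, the best you obtain is
\[
|\vartheta D|\;\geq\;|\vartheta D_1|+|\vartheta D_2|-2|\gamma|\;\geq\;\alpha'\ell|D|-2|\gamma|\;\geq\;\left(\alpha'-\tfrac{C}{K}\right)\ell|D|,
\]
which is strictly \emph{weaker} than the target $|\vartheta D|\geq\alpha'\ell|D|$ for every positive $|\gamma|$. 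No choice of $K$ or $\alpha'<\alpha-C/K$ repairs this: the inductive hypothesis applied to both halves returns exactly $\alpha'$ and the cut term is pure loss, so the effective constant degrades by roughly $C/K$ at each level of recursion, and since the recursion depth is unbounded in $|D|$ the bound eventually becomes vacuous.

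The standard way to salvage a cut-and-induct argument is to invoke the \emph{stronger} local hypothesis (constant $\alpha$, not $\alpha'$) on one of the two pieces, which requires that piece to have at most $K$ faces, and then use the surplus $(\alpha-\alpha')\ell|D_1|$ to absorb $2|\gamma|$. Your pigeonhole on annuli does not deliver a cut of that kind: choosing $r_0\leq K$ controls neither the number of faces enclosed by the ball (which can be tiny or nearly all of $D$) nor $|\gamma|$ in terms of the enclosed piece; the bound $|\gamma|=O(\ell|D|/K)$ scales with $|D|$, whereas what is needed is a bound in terms of $\ell$ times the size of the \emph{small} piece. The actual proofs (Gromov, Papasoglu, and Ollivier's exposition) route around this obstruction rather than through it — Papasoglu establishes thinness of geodesic bigons at the scale $K\ell$ from the local isoperimetric hypothesis and then invokes ``thin bigons imply hyperbolic,'' while the Gromov/Ollivier argument works with a minimal counterexample and a much more careful sub-diagram extraction. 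In short, the split-and-sum induction with a single fixed constant is the well-known false start here, and the technicalities you list at the end (reducedness, single simple closed frontier, additivity of face counts), while real, are secondary to this quantitative failure.
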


In our case the local-to-global result translates to showing that a ``short non-free" solution of a system of equations in a random group has bounded length. Our proof \cite{KhS} bounds the length of such solution linearly with respect to the length of the relators. Assuming that $d<1/16$ and consequently that a random group at this density is alsmost surely an eighth-group the linear bound works well. On the other hand, Massalha \cite{Mass1} observed that allowing the bound to grow double logarithmically does not hurt the second part of the proof (i.e. calculating the probabilities) and in addition one can work out the proof without assuming the special properties of eighth-groups. In particular, one can increase the density parameter to $d<1/2$. 

The main technical tool for proving the above results in both cases is the shortening argument. The shortening argument has been used in many different ways and various assumptions, but until \cite{KhS} it has never been used to a sequence of hyperbolic groups with unbounded hyperbolicity constant. Recall that a random group of length $\ell$, $\Gamma_\ell$, is, with overwhelming probability, $\delta_\ell$-hyperbolic, where $\delta_\ell$ grows linearly with $\ell$. This predicament did not allow us to use the shortening argument as a black box, but we had to tweak its details. The idea that the shortening argument can still be used and the technical details that allowed us to use it was the part, in our proof, that required most of the genuine ideas. More formally we have:

\begin{theorem}[Kharlampovich-Sklinos \cite{KhS}]
Let $d<1/16$. Let $V(x)=1$ be a system of equations and $G_V=\langle x \ | \ V(x) \rangle$ the corresponding group.  Suppose $h_\ell:G_V\rightarrow \Gamma_\ell$ be a short non-free morphism, then $|h_\ell|\leq K\ell$ with overwhelming probability. 
\end{theorem}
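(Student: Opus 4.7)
The plan is to argue by contradiction using a Bestvina--Paulin convergence argument combined with the shortening argument, carefully adapted to the setting where the hyperbolicity constants $\delta_{\ell_n}$ of the ambient groups $\Gamma_{\ell_n}$ grow linearly with $\ell_n$. Suppose no uniform constant $K$ exists for which the conclusion holds; then one can extract an ascending sequence $\{\ell_n\}$ and short non-free morphisms $h_n := h_{\ell_n} \colon G_V \to \Gamma_{\ell_n}$ with $|h_n|/\ell_n \to \infty$. In particular, the stretching factors $\mu_n$ (as defined in Section~2) grow faster than any linear function of $\ell_n$, and by refining the sequence we may arrange $\mu_n \geq \ell_n^2$. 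The convergence statement recalled at the start of Section~2 then produces, after pre-composing each $h_n$ with a suitable inner automorphism $\tau_{f_n}$ of $\Gamma_{\ell_n}$ and rescaling the Cayley graph $X_n$ by $1/\mu_n$, a subconvergent subsequence whose limit is an isometric, non-trivial action of $G_V$ on a pointed real tree $(Y,y_0)$.

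The next step is to invoke Rips--Sela theory on this real tree action: the Rips machine and stabiliser analysis yield an abelian splitting of the associated limit quotient $L$ of $G_V$, providing a supply of modular automorphisms. Since the limit action of $G_V$ on $(Y,y_0)$ is non-trivial and isometric, the classical shortening lemma guarantees some modular automorphism $\alpha$ of $L$ (equivalently, a lifted modular automorphism $\widetilde{\alpha}$ of $G_V$) that strictly shortens the limiting action. Pulling back to the approximating morphisms, one deduces that for all sufficiently large $n$ one has $|h_n \circ \widetilde{\alpha}| < |h_n|$ in $\Gamma_{\ell_n}$, contradicting the shortness of $h_n$ in its orbit under the combined action of inner automorphisms of $\Gamma_{\ell_n}$ and modular automorphisms of $G_V$. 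The role of the non-free hypothesis is to rule out the degenerate case where the limit action has a global fixed point, so that the shortening argument has something to shorten.

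The main obstacle is that the classical shortening lemma is formulated for actions on a fixed hyperbolic space with uniform $\delta$, whereas here $\delta_{\ell_n} = O(\ell_n) \to \infty$, so transferring a shortening from the real tree $Y$ back to $\Gamma_{\ell_n}$ incurs an additive error of order $\delta_{\ell_n}$. The plan to surmount this is to exploit the density hypothesis $d<1/16$, which ensures w.o.p.\ that $\Gamma_{\ell_n}$ is an eighth-group, yielding strong control over cancellation in van Kampen diagrams. Combined with $\mu_n \geq \ell_n^2$, the $O(\ell_n)$ additive error is dominated by the genuinely linear-in-$\mu_n$ decrease produced by the modular automorphism, so the shortening survives in $\Gamma_{\ell_n}$ for large $n$ and the contradiction yields the desired bound $|h_\ell| \leq K\ell$ w.o.p.
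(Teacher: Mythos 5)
Your high-level plan — Bestvina--Paulin convergence to a real tree after rescaling by the stretching factor, followed by the Rips machine and a shortening automorphism pulled back to the approximating groups — is indeed the strategy underlying \cite{KhS}, and you correctly identify the central difficulty, namely that the hyperbolicity constants $\delta_{\ell_n}$ grow like $\ell_n$, so the shortening argument cannot be applied as a black box. However, there is a quantitative gap in the middle of your argument that undercuts the stated conclusion.

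The gap is the sentence ``the stretching factors $\mu_n$ grow faster than any linear function of $\ell_n$, and by refining the sequence we may arrange $\mu_n \geq \ell_n^2$.'' This refinement is not possible. From the contradiction hypothesis you obtain $\mu_n/\ell_n \to \infty$, but this does \emph{not} allow you to pass to a subsequence with $\mu_n \geq \ell_n^2$: for instance, if $\mu_n = \ell_n \ln\ln \ell_n$, then $\mu_n/\ell_n \to \infty$ while $\mu_n < \ell_n^2$ for every $n$, and no subsequence helps. Consequently, if you only invoke the convergence statement recalled at the start of Section~2 (which is Massalha's version and assumes $\mu_n\geq\ell_n^2$), your argument applies only to sequences whose stretching factors are at least quadratic in $\ell_n$. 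That would yield the bound $|h_\ell| < \ell^2$ w.o.p., not the linear bound $|h_\ell|\leq K\ell$ asserted in the theorem. To obtain the linear bound one needs a convergence and shortening argument that already works under the weaker hypothesis $\mu_n/\ell_n\to\infty$; this is exactly where the eighth-group geometry at $d<1/16$ enters in \cite{KhS} --- it is used \emph{inside} the shortening argument to control the constants so that the argument kicks in once $\mu_n$ exceeds a fixed linear multiple of $\ell_n$, not merely once $\mu_n$ is quadratic. Your final paragraph invokes the eighth-group property only to absorb an $O(\ell_n)$ additive error after the convergence has already been assumed, which is not where the genuine work is. As a smaller point, ``non-free'' here means that the morphism does not lift to a solution over $F_n$ (the theorem is only about such morphisms, since free ones can be arbitrarily long); it is not primarily about avoiding a global fixed point in the limit tree.
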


\begin{theorem}[Massalha \cite{Mass1}]
Let $d<1/2$. Let $V(x)=1$ be a system of equations and $G_V=\langle x \ | \ V(x) \rangle$ the corresponding group.  Suppose $h_\ell:G_V\rightarrow \Gamma_\ell$ be a short non-free morphism, then $|h_\ell|\leq (Kln^2\ell)\ell$ with overwhelming probability. 
\end{theorem}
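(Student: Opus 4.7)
My plan is to run the shortening argument on the sequence $h_\ell:G_V\to\Gamma_\ell$, following the overall blueprint of \cite{KhS} but with the coarser quantitative estimates that become available at $d<1/2$. I argue by contradiction: assume there is an ascending sequence $\ell_n\to\infty$ and short non-free morphisms $h_n:=h_{\ell_n}$ with $|h_n|>(K\ln^2\ell_n)\,\ell_n$. After replacing each $h_n$ by a conjugate under an inner automorphism $\tau_{f_n}$ realising the minimum in the definition of the stretching factor $\mu_n$, I may assume $\mu_n\ge\ell_n^2$. The preliminary rescaling theorem recalled in the excerpt then yields, upon extracting a subsequence, a non-trivial isometric action of $F_m$ on a pointed real tree $(Y,y_0)$ to which $\tau_{f_n}\circ h_n$, rescaled by $\mu_n$, subconverges.

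Next comes the standard Rips--Bestvina--Paulin analysis of the action on $Y$. The limit group $L$ through which the $h_n$ eventually factor inherits an action on $Y$ with virtually cyclic arc stabilisers, hence a non-trivial generalised abelian decomposition, and in particular a modular automorphism $\varphi\in\mathrm{Mod}(L)$ that strictly shortens the images of the generators at the basepoint. Lifting $\varphi$ to an endomorphism of $G_V$, I then show that for all sufficiently large $n$ the morphism $h_n\circ\varphi$ has strictly smaller length than $h_n$ with overwhelming probability, contradicting the shortness of $h_n$ and completing the argument.

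The hard part, and the place where the $\ln^2\ell$ slack is spent, is transferring the shortening from the limit tree back to $\Gamma_{\ell_n}$. Unlike \cite{KhS}, where the bound $d<1/16$ secures an eighth-group presentation of $\Gamma_\ell$ and a uniformly controlled hyperbolicity constant per unit relator length, at $d<1/2$ the hyperbolicity constant $\delta_{\ell_n}$ of $\Gamma_{\ell_n}$ grows linearly in $\ell_n$, so the shortening argument cannot be invoked as a black box. Following Massalha, I would replace the tight linear estimates of \cite{KhS} with coarser ones: each Rips move only needs the rescaled Cayley graph of $\Gamma_{\ell_n}$ to approximate $Y$ up to an additive error of order $\delta_{\ell_n}\sim\ell_n$, and the cumulative error from the finitely many moves required can be absorbed inside a multiplicative factor $\ln^2\ell_n$. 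The crucial compatibility observation is that this double-logarithmic relaxation still leaves the second, probabilistic half of the proof intact, because the van Kampen diagram count at $d<1/2$ is robust under multiplicative slack of this size.

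Two technical points will deserve particular care in writing this up: first, verifying that the limit action on $Y$ retains enough structure (stability, tameness of arc stabilisers) to support a Rips--Sela decomposition even though the approximating actions live in hyperbolic groups whose $\delta$-constants diverge; and second, ensuring that $\varphi$ shortens at the limit by a margin that absorbs the additive error $\delta_{\ell_n}\,\ln^2\ell_n/\mu_n$ contributed at each finite stage. Both issues are handled in \cite{Mass1} through hyperbolic-geometric refinements --- most notably a careful use of thin-triangle tree approximations adapted to the divergent $\delta_{\ell_n}$ --- of the tools from \cite{KhS}.
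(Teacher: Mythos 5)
The proposal follows the same overall blueprint the paper outlines for Massalha's result (shortening argument via a limiting action on a real tree, with the $\ln^2\ell$ slack absorbing the error introduced by the divergent hyperbolicity constants $\delta_\ell$). However, there is a logical gap in the first paragraph. You set up the contradiction with $|h_n| > (K\ln^2\ell_n)\ell_n$, and after minimizing over conjugates this gives $\mu_n \gtrsim \ell_n\ln^2\ell_n$. You then assert ``I may assume $\mu_n \ge \ell_n^2$'' so as to invoke the paper's preliminary rescaling theorem; but $\ell_n\ln^2\ell_n$ is strictly smaller than $\ell_n^2$ for large $\ell_n$, so that hypothesis does not follow from the contradiction assumption and cannot simply be assumed. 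As written you are applying a theorem whose hypothesis fails in exactly the regime you must treat.

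This matters because the range $\ell_n\ln^2\ell_n \lesssim \mu_n < \ell_n^2$ is precisely the range of lengths the theorem must exclude; if the real-tree limit only exists for $\mu_n \ge \ell_n^2$, the argument would only yield the much weaker bound $|h_\ell| < \ell^2$. The fix is to observe that the Bestvina--Paulin rescaling actually needs only $\delta_{\ell_n}/\mu_n \to 0$, and since $\delta_{\ell_n} \sim \ell_n$ your contradiction hypothesis gives $\delta_{\ell_n}/\mu_n = O(1/\ln^2\ell_n) \to 0$, so a real-tree limit still exists under the weaker condition --- but this requires re-proving (or re-stating) the rescaling lemma, not just citing the $\mu_n\ge\ell_n^2$ version verbatim. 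Beyond this point, your sketch correctly identifies the two genuine technical difficulties (stability/tameness of the limit action with divergent $\delta_{\ell_n}$, and absorbing the per-move error $\sim\delta_{\ell_n}$ inside the $\ln^2\ell_n$ budget), but defers both entirely to \cite{Mass1} rather than arguing them, so the proposal remains a roadmap rather than a proof.
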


We repeat that in order to get the linear growth in the work of Kharlampovich-Sklinos an extensive use of the properties of eighth groups was made, thus the stricter density parameter. In the work of Massalha the growth is relaxed to double logarithmic and the gain is the larger density parameter. 

Finally, the second part of the proof, i.e. calculating probabilities, is not affected much. In either case, linear or double logarithmic, the probability that some diagram (from boundedly many) is fulfillable goes to $0$ and this completes the proof. We explain how our lemmas can be adapted in the case the growth is double logarithmic.

\begin{lemma} (see \cite[Lemma 7.4]{KhS})\label{NumberofDiagrams}
The total number of families of triangular abstract van Kampen diagrams with $f$ faces labeled by $n\leq f$ different realators of length $\ell$, and whose sides have length less than $(Kln^2\ell)\ell$, for some constant $K$, is bounded by $$\bigl(K(ln^2l)2^{13}\ell^7\bigr)^{f+2q}S(f,n)$$ where $S(f,n)$ is the Stirling number. 
\end{lemma}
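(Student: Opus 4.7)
The plan is to follow the counting strategy of \cite[Lemma 7.4]{KhS} essentially verbatim, making only one substitution: the per-edge length bound $K\ell$ is replaced by $(K\ln^2\ell)\ell$. The assertion is a purely combinatorial bound on abstract triangular van Kampen diagrams; its probabilistic content is exhausted by the hypothesis (coming from Massalha's shortening argument) that sides cannot be longer than $(K\ln^2\ell)\ell$.

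First, I would factor the count as (combinatorial skeleton) $\times$ (per-edge gluing data) $\times$ (relator-to-face assignment). The skeleton is a planar $2$-complex with $f$ triangular faces whose exterior-boundary structure is controlled by the parameter $q$; Euler's formula then forces the total number of edges to be of order $f+2q$, which is the origin of the exponent $f+2q$ in the final bound. Up to a constant factor of $2^{13}$ absorbing a bounded number of discrete choices per edge (orientation, corner type, the three-fold symmetry of each triangle's boundary), the number of such combinatorial skeletons is bounded by $(2^{13})^{f+2q}$.

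Second, to each of the $f+2q$ edges one attaches a length in $\{1,\ldots,(K\ln^2\ell)\ell\}$ (contributing a factor of $K(\ln^2\ell)\ell$), together with the starting position and the orientation of the identification on each of the two adjacent relator boundaries (contributing a bounded power of $\ell$ whose exponent aggregates to $7$, exactly as in the proof of \cite[Lemma 7.4]{KhS}). Multiplying through all $f+2q$ edges yields $\bigl(K(\ln^2\ell)2^{13}\ell^7\bigr)^{f+2q}$. Finally, a labelling of the $f$ faces by exactly $n$ distinct relators amounts to an unordered partition of the face-set into $n$ non-empty blocks (one block per relator used), contributing the Stirling factor $S(f,n)$; multiplying produces the claimed bound.

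The main (and essentially only) obstacle is bookkeeping: one must verify that the $\ln^2\ell$ inflation in the side-length hypothesis is the unique place where the new bound enters, and that neither the exponent $f+2q$ nor the polynomial factor $\ell^7$ depends on the particular side-length bound assumed. This is transparent from the argument of \cite{KhS}, since there the bound $K\ell$ appears only in the per-edge length count and never in the skeleton enumeration or in the position-and-orientation sub-count. The substitution $K\ell \to (K\ln^2\ell)\ell$ therefore goes through line-by-line, which is precisely Massalha's observation that a double-logarithmic slack in the shortening bound is absorbed cleanly by this counting and hence does not spoil the subsequent probability estimate.
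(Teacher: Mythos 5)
Your proposal is correct and takes the same approach as the paper: the paper's own proof is a one-line remark that the argument is identical to that of \cite[Lemma 7.4]{KhS} with the per-side length bound $K\ell$ replaced by $K(\ln^2\ell)\ell$, which is exactly your central observation. Your elaboration of the underlying KhS counting (skeleton enumeration contributing $(2^{13})^{f+2q}$, per-edge length and position/orientation data, and the Stirling factor $S(f,n)$ for the face-to-relator labelling) is consistent with that, and you correctly pinpoint that the $\ln^2\ell$ inflation enters only through the side-length count, leaving the exponent $f+2q$ and the polynomial factor $\ell^7$ unchanged.
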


The proof of the above lemma is identical to the proof of \cite[Lemma 7.4]{KhS} except that we have to remember that the length of each side of a triangular abstract van Kampen diagram is less than $K(ln^2\ell)\ell$ instead of less than $K\ell$.

\begin{prop}(see \cite[Proposition 7.2]{KhS})\label{NumberofFaces}
Let $\Sigma(x)=1$, be a triangular system of equations. Suppose that the length of a short non-free solution of $\Sigma(x)=1$ (in a random group of length $\ell$ and density $d<1/2$) is bounded by $K(ln^2\ell)\ell$, for some constant $K$, with overwhelming probability. 

Then the number of faces of the family of abstract van Kampen diagrams that corresponds to the solution is, with overwhelming probability, bounded by $R(ln^2\ell)$, for some constant $R$ that depends only on the number of variables, the number of equations, and $d$.     
\end{prop}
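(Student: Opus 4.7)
The plan is to combine the length bound on the short non-free solution with the linear isoperimetric inequality enjoyed by random groups at density $d<1/2$. By the Cartan--Hadamard--Gromov--Papasoglu principle (the theorem cited immediately above) applied to the Ollivier--Gromov setting, there exists a constant $\alpha=\alpha(d)>0$ such that with overwhelming probability every reduced van Kampen diagram $D$ over $\Gamma_\ell$ satisfies $|\partial D|\geq\alpha\ell\,|D|$. This linear isoperimetric inequality is the single external input the proof needs, and it is the reason the argument works uniformly throughout the range $d<1/2$.

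First, given the short non-free morphism $h_\ell:G_V\to\Gamma_\ell$, attach to each triangular equation $\sigma_j=1$ of the system $\Sigma$ a reduced van Kampen diagram $D_j$ over $\Gamma_\ell$ whose boundary label is obtained from $\sigma_j$ by replacing each variable $x_i$ with a geodesic word representing $h_\ell(x_i)$. Since $\sigma_j$ has length three in the variables and by hypothesis $|h_\ell(x_i)|\leq K(\ln^2\ell)\,\ell$ for every variable $x_i$, the boundary length satisfies $|\partial D_j|\leq 3K(\ln^2\ell)\,\ell$. Applying the linear isoperimetric inequality yields $|D_j|\leq 3K(\ln^2\ell)/\alpha$, which is $O(\ln^2\ell)$ with implied constant depending only on $K$ and $d$.

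Summing over the finitely many equations in $\Sigma$ --- whose number is bounded in terms of the number of variables $n$ and the number of equations $|\Sigma|$ --- yields a total face count bounded by $R\ln^2\ell$ with $R=R(n,|\Sigma|,d)$ once $K$ is absorbed into the constant, as claimed.

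The main obstacle is verifying that the isoperimetric constant $\alpha(d)$ is uniform and positive on the overwhelming-probability event on which the length bound for $h_\ell$ holds; this is arranged by intersecting the two overwhelming-probability events (both invoked at the same density $d<1/2$). A minor technical point is that the $D_j$ should be taken reduced in order to apply the isoperimetric inequality: this is legitimate because any cancellation pairs in an unreduced filling may be removed without increasing the face count, so one may start from arbitrary van Kampen diagrams filling in the prescribed boundary words and reduce.
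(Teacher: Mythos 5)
Your proof is correct and follows the expected route: the paper does not spell out an argument here but refers to \cite[Proposition 7.2]{KhS}, whose mechanism at density $d<1/2$ is exactly what you use --- the linear isoperimetric inequality $|\partial D|\geq\alpha\ell\,|D|$ for random groups (with $\alpha$ depending only on $d$), applied to the van Kampen diagram for each triangular equation whose boundary length is at most $3K(\ln^2\ell)\ell$, and then summed over the finitely many equations. One small point worth flagging: the final constant $R$ you obtain also depends on the constant $K$ coming from the solution-length bound; this is fine because $K$ itself depends only on the data the proposition allows ($d$, the system), but it is worth stating explicitly that $K$ is being absorbed rather than silently treated as universal.
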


The last piece needed to complete the puzzle comes from calculating the probabilities of fulfilling a family of non-filamentous abstract van Kampen diagrams. This result is independent of how fast a short non-free solution grows with $\ell$.

\begin{lemma}(\cite[Corollary 7.14]{KhS})\label{ProbSingleDiagram}
Let $d<1/2$. Let $\mathbb{D}$ be a family of non-filamentous abstract van Kampen diagrams. Suppose $\mathbb{D}$ has $f$ faces that should be filled in by $n\leq f$ relations. In addition, suppose that $\mathbb{D}$ is decorated by a non-free solution of a system of equations $\Sigma(x)=1$. 

Then the probability that $\mathbb{D}$ can be fulfilled by a random group of level $\ell$ and density $d$ in a way that the decoration is satisfied is less than: 
$$(2m-1)^{-\ell(1/2-d)}$$
\end{lemma}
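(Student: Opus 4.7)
The plan is to leverage the definition of the density model directly: a random group $\Gamma_\ell$ of density $d$ and length $\ell$ is obtained by choosing roughly $(2m-1)^{d\ell}$ relators uniformly at random from the set $\mathcal{C}_\ell$ of cyclically reduced words of length $\ell$, whose cardinality is comparable to $(2m-1)^\ell$. Once the decoration is fixed, each face of a diagram in $\mathbb{D}$ has a prescribed boundary word of length $\ell$ determined by the non-free solution, and the diagram is fulfillable precisely when each of the $n$ distinct boundary words is a cyclic conjugate of (the inverse of) one of the chosen relators.

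First I would reduce the computation to bounding the probability that a single specified cyclically reduced word $w$ of length $\ell$ appears in the random relator set (up to cyclic conjugation and inversion). There are at most $2\ell$ cyclic conjugates of $w$ together with the cyclic conjugates of $w^{-1}$, and the probability that any fixed element of $\mathcal{C}_\ell$ lies in the random set of $(2m-1)^{d\ell}$ relators is at most $(2m-1)^{d\ell}/|\mathcal{C}_\ell|$. A union bound over these $2\ell$ variants gives
\[
\mathrm{Prob}(w \text{ is a random relator up to cyclic conjugation/inversion}) \;\leq\; 2\ell \cdot (2m-1)^{-(1-d)\ell}.
\]
For $\ell$ sufficiently large one has $2\ell \leq (2m-1)^{\ell/2}$, hence this bound tightens to $(2m-1)^{-\ell(1/2-d)}$. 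Since any diagram in $\mathbb{D}$ has at least one face (guaranteed by the non-free assumption on the decoration), singling out any such face and applying the above bound yields the claimed probability estimate.

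The main technical point, rather than a deep obstacle, is to verify that the hypotheses (non-filamentous and decoration by a non-free solution) are correctly used to guarantee that at least one face produces a genuinely new constraint whose satisfaction is a non-trivial probabilistic event. The non-free condition rules out the degenerate situation in which the face-words are trivially realized (e.g.\ by cancellation of a relator against its inverse along a filament), and the non-filamentous assumption ensures that each face boundary is a cyclically reduced word of length exactly $\ell$ so that the combinatorial count $|\mathcal{C}_\ell|\sim(2m-1)^\ell$ applies. Once the existence of at least one such face is established, the argument reduces to the direct probabilistic estimate above, and the resulting bound is independent of $f$ and $n$, as stated.
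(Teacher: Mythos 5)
The paper does not actually prove this lemma: it is cited verbatim from \cite[Corollary 7.14]{KhS}, so there is no in-paper argument to compare against. Taking your proposal on its own terms, the decisive claim — ``once the decoration is fixed, each face of a diagram in $\mathbb{D}$ has a prescribed boundary word of length $\ell$ determined by the non-free solution'' — is not justified and, in the intended setting, is not true for $f\geq 2$. The decoration induced by a non-free solution fixes the labels on the \emph{exterior} boundary of the abstract diagram, but the interior edges are not labeled: their labels, together with the choice of relators for the faces, are precisely what a ``fulfillment'' consists of. This reading is forced by the statement itself (the faces ``should be filled in by $n\leq f$ relations,'' so they carry no labels yet) and by Lemma~\ref{NumberofDiagrams}: the count $(K(\ln^2\ell)2^{13}\ell^7)^{f+2q}S(f,n)$ of families of abstract diagrams is polynomial in $\ell$ and independent of the rank $m$, so a ``family'' cannot already carry an interior edge labeling, which would contribute a factor of order $(2m-1)^{E}$ with $E$ the total interior edge length. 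Consequently, a face whose boundary includes interior edges has no single fixed cyclically reduced word to which your direct estimate for $P(w\text{ is a relator up to cyclic conjugation/inversion})$ applies.

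The missing ingredient is a union bound over the interior labelings, combined with the isoperimetric structure of a non-filamentous diagram: with $f$ faces of boundary length $\ell$, one has $E=(f\ell-|\partial\mathbb{D}|)/2\leq f\ell/2$, and the product (number of interior labelings compatible with the face partition into $n$ classes) $\times$ (probability that the resulting $n$ distinct words all lie in a random set of $(2m-1)^{d\ell}$ relators) works out to order $(2m-1)^{-f\ell(1/2-d)}$, which is $\leq (2m-1)^{-\ell(1/2-d)}$ since a non-free decoration forces $f\geq 1$. The shape of the stated bound is itself a warning sign for your approach: a genuinely fixed word $w$ of length $\ell$ would give $P\lesssim 2\ell\,(2m-1)^{-(1-d)\ell}$, which is \emph{far} smaller than $(2m-1)^{-\ell(1/2-d)}$. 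You only arrive at the exponent $\ell(1/2-d)$ by gratuitously discarding a factor of roughly $(2m-1)^{\ell/2}$ to absorb the harmless $2\ell$; but that missing $(2m-1)^{\ell/2}$ is exactly the size of the union bound over the up-to-$\ell/2$ free interior letters on a face boundary, which is the part of the argument your proposal skips.
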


Finally, we can bring these results together to get:

\begin{theorem}(\cite[Proposition 7.15]{KhS})
Let $d<1/2$. Let $\Sigma(x)=1$ be a system of equations. The probability that a random group of level $\ell$ and density $d$ satisfies some family of decorated abstract van Kampen diagrams whose decoration is induced by a short non-free solution of $\Sigma(\bar{x})=1$ goes to $0$ as $\ell$ goes to $\infty$.    
\end{theorem}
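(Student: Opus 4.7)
The plan is a straightforward union bound that combines the three preceding results in the appendix. First, I would fix a system $\Sigma(x)=1$ of density $d<1/2$ and, for each length $\ell$, a hypothetical short non-free solution of $\Sigma(x)=1$ in $\Gamma_\ell$. Proposition \ref{NumberofFaces} (the adapted version of \cite[Proposition 7.2]{KhS}) controls the complexity of the triangular abstract van Kampen diagrams that can be induced by such a solution: with overwhelming probability, the number of faces $f$ satisfies $f\leq R(\ln^2\ell)$ for a constant $R$ depending only on the number of variables and equations of $\Sigma$ and on $d$.

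Second, with this bound on $f$ in hand, I would feed it into Lemma \ref{NumberofDiagrams} to control the number of possible decorated families. That lemma bounds the number of families of triangular abstract van Kampen diagrams on $f$ faces labeled by at most $n\leq f$ relators of length $\ell$, with sides of length at most $K(\ln^2\ell)\ell$, by
\[
N(\ell) \;\leq\; \bigl(K(\ln^2\ell)\,2^{13}\ell^{7}\bigr)^{f+2q}\,S(f,n).
\]
Substituting $f=O(\ln^2\ell)$ and using the crude estimate $S(f,n)\leq f^{f}$, one checks that $N(\ell)=\exp\bigl(O((\ln\ell)^{3})\bigr)$, i.e.\ subexponential in $\ell$.

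Third, I would apply Lemma \ref{ProbSingleDiagram} to each individual family: the probability that a given non-filamentous decorated family $\mathbb{D}$ can be fulfilled by a random group of length $\ell$ and density $d$ is at most $(2m-1)^{-\ell(1/2-d)}$, which is $\exp(-c\ell)$ for the positive constant $c=(1/2-d)\ln(2m-1)$. A union bound over all families then yields
\[
\mathbb{P}\bigl(\text{some such $\mathbb{D}$ is fulfillable in }\Gamma_\ell\bigr)
\;\leq\; N(\ell)\cdot(2m-1)^{-\ell(1/2-d)}
\;=\; \exp\bigl(O((\ln\ell)^{3})-c\ell\bigr),
\]
which tends to $0$ as $\ell\to\infty$.

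The only real content is the competition of growth rates in this final estimate. The whole point of Massalha's relaxation from the linear bound $K\ell$ to the double-logarithmic inflation $K(\ln^2\ell)\ell$ is that it suffices to know $f$ and the sides are of at most quasi-polynomial size in $\ell$, so that $N(\ell)$ remains subexponential ($\exp(O(\ln^{3}\ell))$); the linear exponential decay $\exp(-c\ell)$ coming from the single-diagram probability then still dominates. If one tried to push the side-length bound up to anything growing like $\ell^{\varepsilon}$ the counting factor would become genuinely exponential and the argument would collapse, so checking that the chosen double-logarithmic inflation lies strictly inside the regime where the union bound survives is the point to be careful about, and it is precisely what allows the density parameter to be pushed all the way to $d<1/2$.
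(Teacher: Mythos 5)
Your proof is correct and follows essentially the same route as the paper: a union bound combining Lemma \ref{NumberofFaces} (face count $f=O(\ln^2\ell)$), Lemma \ref{NumberofDiagrams} (number of decorated families), and Lemma \ref{ProbSingleDiagram} (per-family fulfillment probability $(2m-1)^{-\ell(1/2-d)}$). The only cosmetic difference is bookkeeping: you use the crude estimate $S(f,n)\leq f^f$ to get $N(\ell)=\exp(O(\ln^3\ell))$, whereas the paper passes through Bell numbers and a looser simplification (replacing $R\ln^2\ell$ with $\ln^3\ell$) arriving at $\exp(\ln^5\ell)$; either subexponential bound is crushed by the exponential denominator, so the conclusion is identical.
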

\begin{proof}
The probability of the statement is bounded by the probability of Lemma \ref{ProbSingleDiagram} multiplied by the appropriate number of possible abstract van Kampen diagrams for which a bound is given by the combination of Proposition \ref{NumberofDiagrams} and Lemma \ref{NumberofFaces}. Hence, we need to bound
the function $$F(\ell)=\frac{\sum_{f=1}^{Rln^2\ell}\sum_{n=1}^{f}\bigl(K(ln^2l)2^{13}\ell^7\bigr)^{f+2q}S(f,n) }{(2m-1)^{\ell(1/2-d)}}\leq $$
$$ \frac{\ln ^4 \ell\sum_{n=1}^{\ln ^3\ell}(K(\ln ^2\ell)2^{13}\ell ^7)^{\ln ^3\ell} S(\ln ^3\ell ,n)}{(2m-1)^{\ell(1/2-d)}}\leq  \frac{\ln ^7 \ell(K(\ln ^2\ell)2^{13}\ell ^7)^{\ln ^3\ell} B_{\ln ^3\ell}}{(2m-1)^{\ell(1/2-d)}},$$ where the Bell number $B_{\ln ^3\ell}$ is bounded by $(\frac{0.792 \ln ^3\ell}{\ln (\ln ^3\ell )})^{\ln ^3\ell}.$
Therefore for sufficiently large $\ell$, $F(\ell )$ is bounded from above by
the function $$\frac{{\ell}^{\ln ^4\ell}}{R^{\ell}},$$ where  $R=(2m-1)^{(1/2-d)}.$ We will now show that this function approaches zero as $\ell$ approaches infinity.
$$\frac{{\ell}^{\ln ^4\ell}}{R^{\ell}}=\frac{{e}^{\ln ^5\ell}}{R^{\ell}}=e^{\ln ^5\ell -(\ln R)\ell}.$$
This approaches zero because $\ln ^5\ell -(\ln R)\ell$ approaches $-\infty .$
\end{proof}

\newpage

\end{document}